\documentclass[11pt]{article}
\usepackage[margin=1in]{geometry}
\usepackage{amsmath,amssymb,amsthm}
\usepackage{booktabs}
\usepackage{algorithm,algorithmic}
\usepackage{microtype}
\usepackage{xcolor}
\usepackage{tikz}
\usetikzlibrary{calc,decorations.markings}
\definecolor{cbB}{RGB}{0,114,178}
\definecolor{cbT}{RGB}{230,159,0}
\definecolor{cbU}{RGB}{86,180,233}
\definecolor{cbC}{RGB}{0,158,115}
\definecolor{cbAcc}{RGB}{213,94,0}
\definecolor{cbPur}{RGB}{204,121,167}
\tikzset{
  figlbl/.style={font=\small},
  figsmall/.style={font=\fontsize{9}{11}\selectfont, text=black!85},
  figaxis/.style={black!35, line width=0.3pt},
  figtick/.style={black!45, line width=0.3pt},
  figticklbl/.style={font=\fontsize{9}{11}\selectfont, text=black!75},
  figseg/.style={line width=1.1pt},
  figdim/.style={stealth-stealth, black!55, line width=0.35pt},
  figlead/.style={black!55, line width=0.3pt},
}

\usepackage[font=small]{caption}
\usepackage[colorlinks=true,allcolors=blue,pdfusetitle]{hyperref}
\usepackage[capitalise]{cleveref}
\crefname{figure}{Figure}{Figures}

\newcommand{\AuthorList}{Ethan Keller}
\newcommand{\GithubURL}{\url{https://github.com/ethan-keller/moser-worm-improved-bounds}}
\hypersetup{pdfauthor={Ethan Keller}}

\makeatletter
\def\thm@space@setup{%
  \thm@preskip=16pt plus 4pt minus 2pt
  \thm@postskip=16pt plus 4pt minus 2pt}
\makeatother
\newtheorem{theorem}{Theorem}[section]

\newtheorem{lemma}[theorem]{Lemma}

\theoremstyle{definition}
\newtheorem{definition}[theorem]{Definition}
\theoremstyle{remark}

\numberwithin{equation}{section}

\newcommand{\R}{\mathbb{R}}
\newcommand{\conv}{\operatorname{conv}}
\newcommand{\area}{\operatorname{area}}
\newcommand{\ellg}{\ell}
\newcommand{\tr}{\operatorname{tr}}
\newcommand{\aopt}{\alpha}
\newcommand{\lboundvalue}{0.239}
\newcommand{\uboundvalue}{\frac{7170601298323360123534337}{29109471743520000000000000}}
\newcommand{\ubounddecimal}{0.2463322372\ldots}
\newcommand{\uboundshort}{0.24633\ldots}
\newcommand{\Kub}{K}
\newcommand{\Sfam}{S^{*}}
\newcommand{\lbsym}{\alpha_{\text{lower}}}
\newcommand{\lean}[1]{\texttt{#1}}

\AtBeginDocument{%
  \setlength{\abovedisplayskip}{6pt plus 2pt minus 3pt}%
  \setlength{\belowdisplayskip}{6pt plus 2pt minus 3pt}%
  \setlength{\abovedisplayshortskip}{2pt plus 1pt}%
  \setlength{\belowdisplayshortskip}{4pt plus 1pt}%
}

\title{Improved bounds for universal convex covers of unit arcs}
\author{\AuthorList}
\date{}

\begin{document}
\maketitle

\begin{abstract}
\noindent
Moser's worm problem asks for a planar region of least area containing a congruent copy of every unit arc.
We show that the infimum area $\aopt$ among convex universal covers satisfies $\lboundvalue\le\aopt\le\uboundshort$, reducing the gap between the previous refereed bounds by over $75\%$.
For the lower bound, we choose four unit polygonal arcs and prove by finite subdivision that, however they are placed, their convex hull has area at least $\lboundvalue$.
For the upper bound, we construct a quadrilateral of area $\uboundshort$ and prove cover universality by showing that its support inequalities force uncovered arcs to have length greater than one.
The full proof is formalized in Lean 4 and verified by the Lean kernel. 
Code and certificates are available at \GithubURL.
\end{abstract}

\section{Introduction}
\label{sec:intro}

In the 1960s, Leo Moser asked for the minimum area of a planar region that contains a congruent
copy of every arc of length one (a ``worm'') \cite{moser1966}. This question, known as Moser's worm problem, 
is a classical universal covering problem in discrete geometry. Despite its simple appearance, the problem remains open to this day.

Norwood, Poole and Laidacker proved that a convex universal cover of minimum area
exists \cite{npl1992,kps2013}. In this paper we study this convex case. The optimal
region remains unknown, so work on the problem has focused on bounds for
its area.

\paragraph{Lower bound.}
Wetzel \cite{wetzel1973} proved a lower bound $0.21946$ by combining the diameter
forced by a unit segment with the minimum width forced by Schaer's
broadworm \cite{schaer1968}.
Khandhawit and Sriswasdi \cite{ks2007}
improved this to $0.227498$ using analytic hull-area estimates for three
unit arcs with segment, triangular, and square hulls.
Khandhawit, Pagonakis and Sriswasdi \cite{kps2013} obtained $0.232239$
by combining hull-height estimates for segment, triangular, and
rectangular hulls with the broadworm's minimum-width constraint.
An unrefereed work by Mazur \cite{mazur2026} reports a lower bound
of $0.23743658$ using mixed-area inequalities.

\paragraph{Upper bound.}
Gerriets and Poole \cite{gp1974} constructed a trimmed rhombus of area
below $0.2861$. Norwood, Poole and Laidacker \cite{npl1992} reduced the
bound to $0.27524$ using a circular sector and two attached triangles,
and Wang \cite{wang2006} subsequently obtained $0.270912$.
Panraksa and Wichiramala \cite{pw2021} used support-line contacts and
reflection arguments to prove that a circular sector of radius one and
angle $\pi/6$ covers every unit arc, giving the bound
$\pi/12=0.261799\ldots$.
These covers are convex. The nonconvex constructions of Norwood and
Poole \cite{np2003} and Ploymaklam and Wichiramala \cite{plo2018} concern
the unrestricted problem. Two unrefereed preprints obtain triangular covers by bounding
the length of arcs that cannot fit into a triangle and then scaling it.
Wichiramala and Panraksa \cite{ww-preprint} use interval-certified
support-contact inequalities to obtain a scaled
$30^\circ$--$60^\circ$--$90^\circ$ triangle of area $0.260956\ldots$.
Deng \cite{deng2026} uses balanced support inequalities and exact
rational certificates for an isosceles triangle, obtaining
$0.257883595\ldots$.

In this work we prove \cref{thm:main}, giving a new lower bound of
$\lboundvalue$ and an upper bound of $\uboundshort$. The theorem states
the exact rational upper bound.
The gap between the previous refereed bounds $0.232239$ and $\pi/12$
\cite{kps2013,pw2021} is reduced by approximately $75.2\%$.
Relative to the recent unrefereed bounds reported by Mazur
\cite{mazur2026} and Deng \cite{deng2026}, the reduction is
approximately $64.1\%$. See \cref{tab:bounds-summary}.

\begin{table}[!htbp]
  \centering
  \small
  \begin{tabular}{lccc}
    \toprule
    Bounds & Lower bound & Upper bound & References\\
    \midrule
    Previous refereed
      & $0.232239$ & $\pi/12=0.261799\ldots$
      & \cite{kps2013,pw2021}\\
    Recent unrefereed preprints
      & $0.23743658$ & $0.257883595\ldots$
      & \cite{mazur2026,deng2026}\\
    This paper
      & $\lboundvalue$ & $\ubounddecimal$
      & \cref{thm:main}\\
    \bottomrule
  \end{tabular}
  \caption{Comparison of bounds for convex universal covers.
  Decimal values with ellipses are approximate.}
  \label{tab:bounds-summary}
\end{table}

\begin{theorem}\label{thm:main}
The minimum area $\aopt$ of a convex universal cover of unit arcs satisfies
\[
  \lboundvalue\le\aopt\le\uboundvalue=\ubounddecimal.
\]
\end{theorem}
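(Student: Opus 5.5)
The proof splits into two independent halves, matching the abstract: a lower bound $\aopt\ge\lboundvalue$ via a finite family of test arcs, and an upper bound via an explicit quadrilateral $\Kub$ shown to be a universal cover.

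\emph{Lower bound.} Since a convex universal cover contains congruent copies of every unit arc, it contains the convex hull of any chosen placements of finitely many fixed arcs. I would fix four unit polygonal arcs $\gamma_1,\dots,\gamma_4$. Natural candidates are the unit segment, a broadworm-type zigzag forcing width, and two polygonal arcs with triangular and rectangular hulls in the spirit of \cite{ks2007,kps2013}. I would then show that for every choice of rigid motions $g_1,\dots,g_4$,
\[
\area\conv\Bigl(\textstyle\bigcup_i g_i\gamma_i\Bigr)\ge\lboundvalue .
\]
First I normalise: translate and rotate so that $g_1\gamma_1$ is a fixed segment on the $x$-axis. This leaves three rigid motions, each with two translation parameters bounded by a diameter argument and one angle in $[0,2\pi)$, plus reflections. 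That is a compact 9-dimensional parameter box. The hull area is a maximum over finitely many vertex-triple areas, so it is Lipschitz in the parameters with an explicit constant $L$ depending on the arc lengths.

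Next I subdivide the box into cells. On each cell I evaluate a lower bound at the centre, for instance the area of the hull of selected vertices, and subtract $L$ times the cell radius. Any cell where this fails to clear $\lboundvalue$ is refined further. Monotonicity tricks help here: the area of the hull of a subset of the points is already a valid lower bound. Cells that are translates too far apart are discharged immediately, since the diameter exceeds what a hull of area $<\lboundvalue$ permits given a fixed width. The computation would be done with exact rationals and replayed in Lean.

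\emph{Upper bound.} I would construct an explicit quadrilateral $\Kub$ with rational-coordinate (or algebraic) vertices of area $\uboundvalue$ and show that every unit arc fits in it. It suffices to treat polygonal arcs by a compactness/approximation argument, closing under limits since $\Kub$ is closed. Given an arc $\gamma$ that does not fit, consider the orientation in which the scaled copy of $\Kub$ minimally containing $\gamma$, up to a fixed rotation family, has support lines in four directions. Each support line touches $\gamma$, and the points of contact are visited by $\gamma$ in some order. The length of $\gamma$ is at least the length of the shortest path through these contact points in that order. Using reflection arguments as in \cite{pw2021}, each such shortest path is bounded below by a linear functional of the support distances: the support inequalities. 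If $\gamma$ does not fit, the support distances exceed those of $\Kub$ in enough directions. Combining the inequalities with nonnegative rational multipliers, a certificate for each contact order and case, then gives length $>1$, a contradiction.

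The main obstacle is this upper-bound case analysis. One must choose the rotation used to position $\gamma$, for example by averaging or rotating until two support constraints balance, and then enumerate all orderings of contacts and degenerate coincidences. For each, a valid linear certificate is needed. I would first search numerically for $\Kub$ and the multipliers by LP, then rationalise them and verify exactly, with the case enumeration machine-checked.
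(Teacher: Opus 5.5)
There is a genuine gap. Your architecture is right: a finite test family with subdivision for the lower bound, and an explicit quadrilateral with support inequalities and rational certificates for the upper bound. Your normalisation and diameter-based translation bounds also match the paper's domain reduction. But the steps that carry each half are either missing or would not close.

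\textbf{Lower bound.} The value $\lboundvalue$ is a property of a specific, finely tuned family, and you never fix one. The paper uses $L,T,U,C$, with a splayed $U$ (trapezoidal hull, interior angles $13\pi/24$) and a crinkled $C$ (parallelogram hull), not a rectangle and a broadworm. Its best numerical joint placement has area $0.23943903$, only about $4.4\cdot10^{-4}$ above the target.

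At that margin your Lipschitz scheme would not close in practice. The loss $Lr$ is first order in the cell radius with $L$ of order one, so near the minimiser every cell would need width of order $10^{-4}$ in nine coordinates. Separately, your justification is false as stated: hull area is not a maximum of vertex-triangle areas. A unit square has hull area $1$, but every vertex triangle has area $1/2$; Lipschitz continuity needs a different argument.

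The missing idea is to bound $\area(H)$ on a whole box by a weighted combination $\sum_\nu\lambda_\nu F_{\mathcal P_\nu}$ of shoelace expressions. The weights are chosen by linear programming so that the linear terms of expressions varying in opposite directions cancel. The combination is then enclosed as a multiaffine quadratic in the twelve variables of \cref{lem:rotation-reduction}. This requires $F_{\mathcal P}\le\area(H)$ throughout the box for lists that need not be in convex position. That is supplied by \cref{lem:small-polygon} (any order when $m\le5$, including the pentagram case), \cref{lem:fan} and \cref{lem:chord-cut}. Nothing in your plan plays this role.

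\textbf{Upper bound.} Your plan is not yet a finite argument, and the case analysis you flag as the main obstacle is exactly where the paper's key ideas sit. A balancing orientation is an unknown continuous parameter. The paper avoids it with \cref{lem:escape}. Since $a_{j0}n_0+a_{j1}n_1+n_j=0$ with $a_{j0},a_{j1}>0$, translations can be eliminated exactly. At each fixed rotation $M$, non-containment then becomes a disjunction of two strict linear inequalities in support values. The certificate branches over finitely many fixed orthogonal matrices $M$ and combines inequalities from different orientations.

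Support values in rotated directions are bounded by nonnegative combinations of support values in a fixed list of contact directions, which produces many contacts. The length bound is not a reflection argument but the dual estimate \cref{eq:length}: $\sum_\lambda f_\lambda\cdot P_\lambda\le\ellg_Q$ whenever every prefix sum of the coefficient vectors has norm at most one.

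With many contacts, enumerating all visiting orders is factorial. Two results keep it small:
\begin{itemize}
\item \cref{lem:finite}: a shortest visiting path of the hull vertices has no crossings, so every visited prefix is a boundary interval.
\item \cref{lem:normal}: there is a bottom edge whose endpoints are visited before and after the top vertex.
\end{itemize}
Together these reduce the admissible prefixes to the $O(k_R^2+k_L^2)$ sets in $\mathcal F(\mathcal R)$. Without these ingredients, nothing indicates that one balanced orientation with four contacts could certify a quadrilateral of area $\uboundshort$. As with the arcs, the explicit side data of $\Kub$ is itself part of what must be supplied.
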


The lower-bound proof follows the finite-family approach of \cite{kps2013}.
We choose four unit polygonal arcs, reduce their joint placements
to a bounded range of translations and rotations, and divide them into
boxes. In each leaf box, a shoelace expression certifies that
the joint hull has area at least $\lboundvalue$.

For the upper bound, we construct a quadrilateral and prove that it is a universal cover.
We first give the equations for the sides of a quadrilateral and then show that any unit arc it fails to cover
forces a finite system of inequalities that is not compatible with length one. 

Lower and upper arguments are both computer-assisted, fully formalized in Lean 4 and verified by the Lean kernel.

After the definitions in \cref{sec:prelim}, we give the lower-bound
argument in \cref{sec:lb} and the upper-bound argument in \cref{sec:ub}.
\Cref{sec:verification} describes the certificates and formalization.
\Cref{sec:conclusion} concludes and discusses further directions.

\section{Preliminaries}\label{sec:prelim}

We work in the Euclidean plane $\R^2$, with dot product $u\cdot v$
and norm $|u|=\sqrt{u\cdot u}$. For $S\subseteq\R^2$, we write
$\conv(S)$ for its convex hull. When $S$ is measurable, $\area(S)$
denotes its planar Lebesgue area, which may be infinite.

\begin{definition}[Arcs]
An \emph{arc} (or \emph{curve}) is a continuous map
$\gamma:[0,1]\to\R^2$ of finite length, where
\[
  \ellg(\gamma)=
  \sup_{0=t_0<\cdots<t_k=1}
    \sum_{j=0}^{k-1}|\gamma(t_{j+1})-\gamma(t_j)|.
\]
The supremum is over all finite partitions of $[0,1]$.
Self-intersections and retracing are allowed.
A \emph{unit arc} has length one. The \emph{trace} of $\gamma$ is the
set of points it visits.
\[
  \tr(\gamma)=\{\gamma(t):t\in[0,1]\}.
\]
\end{definition}

\emph{Rigid motions} are distance-preserving maps of the plane.
They form the Euclidean group $E(2)$ and have the form
\[
  g(x)=Mx+b,\qquad M\in O(2),\quad b\in\R^2,
\]
where $O(2)$ is the group of real orthogonal $2\times2$ matrices.
These motions include translations, rotations and reflections.
Two sets are \emph{congruent} if a rigid motion maps one onto the other.

\begin{definition}[Covering]
\label{def:covering}
A set $D\subseteq\R^2$ \emph{covers} a set $S\subseteq\R^2$ if it
contains a congruent copy of $S$, that is, $g(S)\subseteq D$ for some
$g\in E(2)$. It covers an arc $\gamma$ if it covers $\tr(\gamma)$.
A \emph{universal cover} covers every unit arc. A \emph{convex universal
cover} is a universal cover that is convex.
\end{definition}

We study the infimum area
\[
  \aopt=\inf\{\area(D):D\subseteq\R^2
    \text{ is a convex universal cover}\}.
\]

\section{The lower bound}\label{sec:lb}

A convex universal cover must contain a congruent copy of each arc
in any finite family of unit arcs, with a separate rigid motion allowed for each
(\cref{def:covering}). We choose the four polygonal arcs of
\cref{sec:worm-family} and prove that every joint convex hull has area
at least $\lbsym=\lboundvalue$.
We first reduce the placement domain (\cref{sec:lb-domain}), then
bound the area on boxes of placements (\cref{sec:lb-fans,sec:lb-box}).
A finite subdivision certifies the bound over the whole reduced domain
(\cref{sec:lb-certificate}).

\subsection{The worm family}
\label{sec:worm-family}

We construct a set of four polygonal unit arcs (the ``worm family'').

\begin{itemize}
  \item The $L$ worm is the line segment with endpoints $(-1/2,0)$ and $(1/2,0)$.
  \item The $T$ worm is the two sides of an equilateral triangle formed by joining two segments of length $1/2$ at a $\pi/3$ angle. Successive vertices are positioned at $(-1/4,0), (0,\sqrt3/4), (1/4,0)$.
  \item The $U$ worm forms a splayed U-shape with three sides of length $1/3$ and two interior angles of $13\pi/24$.
  With $\beta=11\pi/24$, the four successive vertices are positioned at
\[
  z_0 = (0, 0), \qquad z_j=\frac13\sum_{r=0}^{j-1}\bigl(\cos(r\beta),\sin(r\beta)\bigr),
  \qquad j\in\{1,2,3\}.
\]

  \item The $C$ worm follows a crinkled pattern. With $t=\frac{\sqrt{8745}}{1024}$, the successive vertices are at
  \[
    (0,0),\qquad(7/128,t),\qquad(105/128,-t),\qquad(7/8,0).
  \]
Its side lengths are $109/1024$, $806/1024$, $109/1024$. 
\end{itemize}

All four arcs have length one. Set $\Sfam=\{L,T,U,C\}$. Their convex hulls are a segment, an equilateral
triangle, an isosceles trapezoid, and a parallelogram
(\cref{fig:family}). \Cref{fig:joint-placement} shows the placement of this family with the smallest
hull area found numerically $0.23943903$.

\begin{theorem}\label{thm:lb}
Every convex set covering all four arcs in $\Sfam$ has area at least $\lbsym=\lboundvalue$. Consequently $\aopt\ge\lboundvalue$.
\end{theorem}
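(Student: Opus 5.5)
The plan follows the finite-family strategy of \cite{kps2013}. If a convex set $D$ covers each arc of $\Sfam$, then $D$ contains $g_L(L)\cup g_T(T)\cup g_U(U)\cup g_C(C)$ for some rigid motions $g_X\in E(2)$. Since $D$ is convex, it contains the convex hull $H(g_L,g_T,g_U,g_C)$ of these four images, so $\area(D)\ge \area(H)$. Hence it suffices to show that $\area(H)\ge\lbsym$ for every quadruple of motions. The consequence $\aopt\ge\lboundvalue$ is then immediate, because every convex universal cover covers the four arcs.

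The first step is to reduce the placement domain. The hull area is invariant under a common rigid motion of all four arcs, so I normalize $g_L$ to be the identity, placing $L$ on $[-1/2,1/2]\times\{0\}$. Reflections of the whole configuration about the $x$- and $y$-axes preserve $L$, which reduces the remaining data to the triples $(\theta_X,b_X)$ for $X\in\{T,U,C\}$, up to those symmetries. Each arc $X$ has its own symmetry group: $T$, $U$ and $C$ each have a reflection or point symmetry that maps the arc onto itself as a set. I use these to restrict each rotation $\theta_X$ to a half-range, and I handle an individual reflection of an arc by the same quotient. For the translations, note that $H$ contains the segment $L$ and the hull of $g_X(X)$. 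If the translation $b_X$ is large, say $|b_X|$ exceeds some explicit radius $R$, then $H$ contains a triangle with base $L$ of length $1$ and height at least $R-1$, which already has area at least $\lbsym$. This confines every $b_X$ to a compact box, and I would choose $R$ near $1.5$. The result is a compact domain of dimension $9$, with three angles and six translation coordinates.

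The second step is the box bound. On a box $B=\prod[\theta^-,\theta^+]\times[b^-,b^+]$, each arc vertex $v$ has image $g_X(v)=R(\theta)v+b$, and this image ranges over a small explicitly boundable region. That region lies inside an annular-sector piece, which is contained in a polygon computable from the box corners. To get a lower bound on the area, I fix a finite set of vertices, evaluated at the box center, that form a fan around an interior point. Expanding the shoelace formula for their hull gives an expression of the form $\sum \det(p_i,p_{i+1})$. I then bound each term below by interval arithmetic over the box, with rigorous sine and cosine enclosures. Because the hull of the true points always contains the polygon with these vertices in this cyclic order, as long as that order stays convex, or more simply because any polygon spanned by points of $H$ in a consistent orientation has area at most $\area(H)$, a positive lower bound on the signed shoelace sum is a valid lower bound on $\area(H)$ throughout the box.

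The last step is a finite adaptive subdivision. I bisect boxes along their widest coordinate until the interval shoelace bound is at least $\lbsym$. The output is a certificate tree, which is checked with exact rational arithmetic, as in the Lean formalization.

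I expect the main obstacle to be the near-optimal region. There the true minimum, $0.23943903$, is only about $0.0004$ above $\lbsym$. Interval overestimation is of first order in the box width, so the boxes there must be tiny in nine dimensions, and the size of the tree could explode. My first attempt at controlling this would be to use a first-order Taylor (mean-value) form of the shoelace expression in the box parameters, rather than naive interval arithmetic. This makes the error second order in the box width. I would also choose the fan vertices adaptively per box, keeping whichever candidate polygon gives the best bound.
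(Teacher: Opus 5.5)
Your outline follows the paper's strategy: fix $L$, quotient by symmetries, compactify, subdivide, and certify shoelace bounds. However, two steps fail as written.

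\textbf{Translation bound.} The triangle $\conv\{(-\tfrac12,0),(\tfrac12,0),p\}$ has height $|p_y|$, the distance from $p$ to the \emph{line} through $L$, not to the segment. If $b_X$ lies far out along the $x$-axis, this triangle is degenerate, and you learn only $|b_{X,y}|\le 2\area(H)$. Horizontal translations need a width argument. The hull of $T$ has width at least $w=\sqrt3/4$ in every direction. Applying the chord bound $\area(H)\ge de/2$ to a longest chord of $H$ therefore gives $\operatorname{diam}(H)\le 2\area(H)/w$. Since $b_X\in H$ (the origin lies in each arc's hull for the given coordinates), you get $|b_{X,x}|+\tfrac12\le\max_\pm|b_X-e_\pm|\le\operatorname{diam}(H)$.

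\textbf{Angle reduction.} Your angle reduction also overclaims. A reflection symmetry of the $T$ or $U$ arc removes only the reflection choice, not half the angle range. You can instead pass to hulls, which leaves $H$ unchanged, and use the threefold symmetry of the $T$ hull. The global half-turn can then halve only one range, giving for example $[0,\pi/3]\times[0,2\pi]\times[0,\pi]$. Restricting $\theta_U$ as well would drop placements.

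\textbf{Leaf test.} More seriously, the leaf test is unsound. It is false that any consistently oriented polygon with vertices in $H$ has signed area at most $\area(H)$ once it has six or more vertices. A triangle traced twice, perturbed to have distinct vertices, turns left at every vertex and has signed area close to twice its hull area. Your other justification, that the list stays convex, is correct. But it is a condition on the placement that must be certified on the whole box, and your acceptance criterion checks only the interval lower bound on the shoelace sum. This matters exactly near the optimum, where hull vertices nearly coincide; in \cref{fig:joint-placement} a vertex of $T$ coincides with a vertex of $C$. A list that is convex at the box center can therefore change order inside the box. The paper closes this gap in three ways:
\begin{enumerate}
\item Lists of at most five points satisfy $F_{\mathcal P}\le\area(H)$ in any order; the five-point case needs the pentagram comparison of \cref{app:five-point}.
\item Longer lists satisfy it under the strict fan inequalities $\xi_1\times\xi_i>0$ and $\xi_i\times\xi_{i+1}>0$, which are verified as determinant signs over the entire box.
\item Chord decompositions split a list into such pieces.
\end{enumerate}

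\textbf{Near-optimal region.} Your remedy here misdiagnoses the obstacle, though this concerns feasibility rather than soundness. A mean-value form removes interval overestimation. The true range of a single shoelace expression over a box still has first-order width about $\sum_j|\partial_jF|\,r_j$. Near the minimizer the hull area is generally only piecewise smooth, so no single list is stationary there. The paper instead uses $F=\sum_\nu\lambda_\nu F_{\mathcal P_\nu}$ with $\lambda_\nu\ge0$ and $\sum_\nu\lambda_\nu\le1$, chosen by linear programming so that the linear coefficients largely cancel. This $F$ still satisfies $F\le\area(H)$.
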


\begin{figure}[hbp]
  \centering
  \begingroup
\pgfmathsetmacro{\veeh}{sqrt(3)/4}
\pgfmathsetmacro{\ubase}{1/6}
\pgfmathsetmacro{\uouter}{(1+2*cos(82.5))/6}
\pgfmathsetmacro{\uheight}{sin(82.5)/3}
\pgfmathsetmacro{\crowny}{sqrt(8745)/1024}
\pgfmathsetmacro{\crownangle}{atan(128*\crowny/105)}
\pgfmathsetmacro{\crownshift}{(7/16)*sin(\crownangle)}
\begin{tikzpicture}[x=3.25cm,y=3.25cm,line join=round,line cap=round]
  \begin{scope}
    \draw[figseg,cbB] (-.5,0)--(.5,0);
    \draw[figtick] (-.5,-.065)--(-.5,-.145);
    \draw[figtick] (.5,-.065)--(.5,-.145);
    \draw[figdim] (-.5,-.11)--(.5,-.11);
    \node[figticklbl,fill=white,inner sep=1pt] at (0,-.11) {$1$};
    \node[figlbl,text=cbB!75!black] at (0,-.25) {$L$};
  \end{scope}
  \begin{scope}[shift={(1.18,0)}]
    \path[fill=cbT!18]
      (-.25,0)--(.25,0)--(0,\veeh)--cycle;
    \draw[black!45,densely dashed,line width=.4pt] (-.25,0)--(.25,0);
    \draw[figseg,cbT!80!black] (-.25,0)--(0,\veeh)--(.25,0);
    \node[figlbl,text=cbT!60!black] at (0,-.25) {$T$};
  \end{scope}
  \begin{scope}[shift={(2.19,0)}]
    \path[fill=cbU!20]
      (-\uouter,0)--(\uouter,0)--(\ubase,\uheight)
      --(-\ubase,\uheight)--cycle;
    \draw[black!45,densely dashed,line width=.4pt]
      (-\uouter,0)--(\uouter,0);
    \draw[figseg,cbU!80!black]
      (-\uouter,0)--(-\ubase,\uheight)
      --(\ubase,\uheight)--(\uouter,0);
    \node[figlbl,text=cbU!50!black] at (0,-.25) {$U$};
  \end{scope}
  \begin{scope}[shift={(3.41,\crownshift)},rotate=\crownangle]
    \path[fill=cbC!18]
      (-7/16,0)--(49/128,-\crowny)--(7/16,0)
      --(-49/128,\crowny)--cycle;
    \draw[black!45,densely dashed,line width=.4pt]
      (-7/16,0)--(49/128,-\crowny);
    \draw[black!45,densely dashed,line width=.4pt]
      (7/16,0)--(-49/128,\crowny);
    \draw[figseg,cbC]
      (-7/16,0)--(-49/128,\crowny)
      --(49/128,-\crowny)--(7/16,0);
  \end{scope}
  \node[figlbl,text=cbC!60!black] at (3.41,-.25) {$C$};
\end{tikzpicture}
\endgroup
  \caption{The four unit arcs, at the same scale. Heavy strokes
  are the arcs, and dashed segments complete their shaded convex hulls.}
  \label{fig:family}
\end{figure}

\begin{figure}[htbp]
  \centering
  \begin{tikzpicture}[x=8cm,y=8cm,line join=round,line cap=round]
  \coordinate (L0) at (-0.500000000000,0.000000000000);
  \coordinate (L1) at (0.500000000000,0.000000000000);
  \coordinate (T0) at (0.432457379679,-0.055022582177);
  \coordinate (T1) at (-0.066674396651,-0.025569740923);
  \coordinate (T2) at (0.208398400254,0.391964636587);
  \coordinate (U0) at (0.397380461133,-0.052952758210);
  \coordinate (U1) at (0.373414526423,0.279517909796);
  \coordinate (U2) at (0.040660008870,0.299153137300);
  \coordinate (U3) at (-0.022240416660,-0.028191707748);
  \coordinate (C0) at (-0.391489594092,-0.006402997988);
  \coordinate (C1) at (-0.341616151666,0.087635527595);
  \coordinate (C2) at (0.432457379679,-0.055022582177);
  \coordinate (C3) at (0.482330822105,0.039015943405);

  \path[fill=black!8,draw=black!50,line width=.4pt]
    (L0)--(C0)--(U3)--(U0)--(C2)--(L1)--(U1)--(T2)--(U2)--cycle;

  \draw[figseg,cbC] (C0)--(C1)--(C2)--(C3);
  \draw[figseg,cbU!80!black] (U0)--(U1)--(U2)--(U3);
  \draw[figseg,cbT!80!black] (T0)--(T1)--(T2);
  \draw[figseg,cbB] (L0)--(L1);

  \node[figlbl,text=cbB!75!black,anchor=east,inner sep=1pt] at (-.54,0) {$L$};
  \node[figlbl,text=cbT!60!black] at (.212,.325) {$T$};
  \node[figlbl,text=cbU!50!black] at (.052,.212) {$U$};
  \node[figlbl,text=cbC!60!black] at (-.245,.035) {$C$};

\end{tikzpicture}
  \caption{The four unit arcs in their best recorded numerical joint
  placement, with their convex hull shaded and area approximately $0.23943903$.}
  \label{fig:joint-placement}
\end{figure}

\subsection{Bounding the placement domain}
\label{sec:lb-domain}

A \emph{placement} assigns an independent rigid motion to each of the
four arcs. Let $H$ be their \emph{joint hull}, the convex hull of the
placed traces. We show that every placement with $\area(H)<\lbsym$
can be represented in a bounded parameter box without changing
$\area(H)$.

Replacing each arc by its convex hull leaves $H$ unchanged.
A global rigid motion preserves $\area(H)$, so we can fix $L$.

For $i\in\{T,U,C\}$, translate its convex hull so that the average 
of its vertices is the origin. The resulting vertices $v_{i,j}$ are
indexed counterclockwise from the leftmost vertex. 
Here $0\le j<m_i$, with $m_T=3$ and $m_U=m_C=4$.
Write $t_i=(t_{ix},t_{iy})$ for translation and $R(\phi_i)$ for
rotation through angle $\phi_i$. We allow reflection in the $x$-axis
before rotation.

\paragraph{Full placement space.}
With $L$ fixed, let $\varepsilon_i=1$ if reflection in the $x$-axis is
applied to hull $i$, and $\varepsilon_i=0$ otherwise. Allowing the
angles and translations to be arbitrary real numbers gives the
parameterization
\[
  (\phi_T,\phi_U,\phi_C,\,
   t_{Tx},t_{Ty},t_{Ux},t_{Uy},t_{Cx},t_{Cy};\,
   \varepsilon_T,\varepsilon_U,\varepsilon_C)
  \in\R^9\times\{0,1\}^3.
\]

\paragraph{Eliminating reflections.}
The hulls of $T$ and $U$ have reflection symmetries, so every reflected
copy of either hull can also be obtained by rotation and translation.
If the hull of $C$ is reflected, reflect the entire configuration in
the $x$-axis. This fixes $L$, preserves $\area(H)$, and removes the reflection
choice for $C$. The resulting copies of $T$ and $U$ can again be
represented without reflections. We may therefore take
$\varepsilon_T=\varepsilon_U=\varepsilon_C=0$. The remaining parameter
domain is $\R^9$, and each placed vertex has the form
$t_i+R(\phi_i)v_{i,j}$.

\paragraph{Bounding angles.}
First reduce all three angles modulo $2\pi$.
The hulls of $T$ and $C$ have rotational periods $2\pi/3$ and $\pi$,
so their angles may further be restricted to $[0,2\pi/3]$ and $[0,\pi]$.
A global half-turn fixes $L$ and preserves $\area(H)$. It adds $\pi$ to
$\phi_T$, which is equivalent to adding $\pi/3$ modulo $2\pi/3$.
Choosing whether to apply this half-turn therefore halves the remaining
range for $\phi_T$. We obtain
\begin{equation}\label{eq:angles}
  (\phi_T,\phi_U,\phi_C)\in[0,\pi/3]\times[0,2\pi]\times[0,\pi].
\end{equation}

\paragraph{Bounding translations.}
Each translation still ranges over $\R^2$. We show that $\area(H)<\lbsym$
forces all three hull centers into a fixed rectangle. The following
area estimate provides the needed bound.
The \emph{width} (or \emph{extent}) of a nonempty compact set $S$ in
a unit direction $u$ is
$\max_{x\in S}u\cdot x-\min_{x\in S}u\cdot x$.

\begin{lemma}[chord bound]\label{lem:chord}
Let $H\subseteq\R^2$ be convex and contain a segment of length $d>0$
and a nonempty compact set $S$. If $e$ is the extent of $S$
perpendicular to the segment, then
\[
  \area(H)\ge\frac{de}{2}.
\]
\end{lemma}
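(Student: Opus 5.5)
Rotate and translate coordinates so that the segment is $[0,d]\times\{0\}$; then $e$ is the extent of $S$ in the $y$-direction. Since $S$ is compact, it contains points $p$ and $q$ with $y$-coordinates equal to $\max_{x\in S}y$ and $\min_{x\in S}y$, so $p_y-q_y=e$. Let $a=(0,0)$ and $b=(d,0)$ be the endpoints of the segment. Since $H$ is convex and contains $a,b,p,q$, it contains $Q=\conv\{a,b,p,q\}$, and it suffices to show $\area(Q)\ge de/2$. Monotonicity of Lebesgue measure then gives the claim; this holds even if $H$ is unbounded or has infinite area.

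I will split into cases according to the position of the line $y=0$ relative to $p_y$ and $q_y$.

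\textbf{Case 1: $q_y\le 0\le p_y$.} The triangles $\conv\{a,b,p\}$ and $\conv\{a,b,q\}$ lie in the closed half-planes $y\ge0$ and $y\le0$, so their interiors are disjoint. Both triangles lie in $Q$. Their areas are $dp_y/2$ and $d(-q_y)/2$, which sum to $de/2$.

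\textbf{Case 2: $0<q_y\le p_y$ (and the symmetric case $q_y\le p_y<0$).} Here I would use the triangle $\conv\{a,b,p\}$, whose area is $dp_y/2\ge d(p_y-q_y)/2=de/2$, because $q_y>0$. The symmetric case uses $\conv\{a,b,q\}$ with height $|q_y|\ge e$.

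The computations are shoelace-formula or base-times-height estimates, so there is no real obstacle. The only points needing care are the following.
\begin{itemize}
\item Degenerate configurations, such as $e=0$ or $p$ lying on the line, are harmless because the bounds remain valid with value $0$.
\item In Case 1 I must confirm that the two triangles overlap only in a null set, namely the segment $[a,b]$, so that their areas add inside $Q$.
\end{itemize}

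Alternatively, both cases can be handled at once. For each $y\in[q_y,p_y]$, the horizontal slice of $Q$ contains the slice of whichever triangle ($\conv\{a,b,p\}$ or $\conv\{a,b,q\}$) spans that height. That slice has length $d(1-y/p_y)$ or the analogous expression, and integrating by Cavalieri's principle gives at least $de/2$. I would present the case split, since it is more elementary.
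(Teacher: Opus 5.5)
Your case-split argument is correct and is essentially the paper's proof. The paper avoids the split by adjoining the segment's endpoints to $S$ and taking the maximal perpendicular distances $h_+,h_-\ge0$ on the two sides, so that $h_++h_-\ge e$ and your Case~2 becomes the situation where one of the two triangles degenerates to the segment. One small caution about the Cavalieri aside you chose not to use: when $0<q_y$, integrating the slice of $\conv\{a,b,p\}$ only over $y\in[q_y,p_y]$ gives $de^2/(2p_y)<de/2$, so that version needs the range $[\min(q_y,0),\max(p_y,0)]$, which is again the paper's $h_++h_-$.
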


\begin{proof}
Let $S'$ be the union of $S$ and the set of the segment's endpoints. Let $h_+$ and
$h_-$ be the maximum perpendicular distances attained by $S'$ in the
two closed half-planes bounded by the segment's line, respectively.
Thus $h_+,h_-\ge0$ and $h_++h_-\ge e$.
The segment and the corresponding extreme points form two possibly
degenerate triangles $\Delta_+,\Delta_-\subseteq H$
(\cref{fig:chord-bound}). Their interiors are disjoint, so
\[
  \area(H)\ge\area(\Delta_+)+\area(\Delta_-)
  =\frac d2(h_++h_-)\ge\frac{de}{2}.
\]
\end{proof}

\begin{figure}[htbp]
  \centering
  \begin{tikzpicture}[line join=round,line cap=round]
  \def\chordHull{(-.3,0)--(.3,-1.1)--(2.8,-1.2)--(4,0)
    --(3.2,1.25)--(1.2,1.4)--cycle}
  \begin{scope}[x=1.1cm,y=1.1cm]
    \node[figlbl] at (1.8,1.85) {(a) $S$ on both sides of the line};
    \path[fill=black!3,draw=black!40,line width=.45pt] \chordHull;
    \path[fill=cbB!15,draw=cbB!70,line width=.5pt]
      (0,0)--(3.6,0)--(1.8,.95)--cycle;
    \path[fill=cbAcc!15,draw=cbAcc!75,line width=.5pt]
      (0,0)--(3.6,0)--(1.8,-.75)--cycle;
    \draw[black!65,densely dashed,line width=.5pt]
      (1.8,.1) ellipse[x radius=.5,y radius=.85];
    \node[figlbl] at (1.8,.43) {$S$};
    \node[figlbl,text=black!60] at (3.35,.55) {$H$};
    \node[figsmall,text=cbB!80!black] at (.9,.24) {$\Delta_+$};
    \node[figsmall,text=cbAcc!80!black] at (.9,-.22) {$\Delta_-$};
    \draw[figaxis,densely dotted] (-.78,0)--(4.48,0);
    \draw[figtick,densely dotted] (-.78,.95)--(4.48,.95);
    \draw[figtick,densely dotted] (-.78,-.75)--(4.48,-.75);
    \draw[figdim] (-.62,0)--(-.62,.95)
      node[midway,left,figlbl] {$h_+$};
    \draw[figdim] (-.62,-.75)--(-.62,0)
      node[midway,left,figlbl] {$h_-$};
    \draw[figdim] (4.32,-.75)--(4.32,.95)
      node[midway,right,figlbl] {$e$};
    \draw[figseg,cbB] (0,0)--(3.6,0);
    \foreach \p in {(0,0),(3.6,0),(1.8,.95),(1.8,-.75)}
      \fill[black!75] \p circle[radius=1.5pt];
    \draw[figtick] (0,-.10)--(0,-1.58);
    \draw[figtick] (3.6,-.10)--(3.6,-1.58);
    \draw[figdim] (0,-1.48)--(3.6,-1.48)
      node[midway,fill=white,inner sep=1pt,figlbl] {$d$};
    \node[figlbl] at (1.8,-1.96) {$h_++h_-=e$};
  \end{scope}
  \begin{scope}[shift={(7.5,0)},x=1.1cm,y=1.1cm]
    \node[figlbl] at (1.8,1.85) {(b) $S$ on one side of the line};
    \path[fill=black!3,draw=black!40,line width=.45pt] \chordHull;
    \path[fill=cbB!15,draw=cbB!70,line width=.5pt]
      (0,0)--(3.6,0)--(1.8,1.15)--cycle;
    \draw[black!65,densely dashed,line width=.5pt]
      (1.8,.75) ellipse[x radius=.5,y radius=.4];
    \node[figlbl] at (1.8,.75) {$S$};
    \node[figlbl,text=black!60] at (3.35,.55) {$H$};
    \node[figsmall,text=cbB!80!black] at (.9,.24) {$\Delta_+$};
    \node[figsmall,text=cbAcc!80!black] at (.9,-.22) {$\Delta_-$};
    \draw[figaxis,densely dotted] (-.78,0)--(4.48,0);
    \draw[figtick,densely dotted] (-.78,1.15)--(4.48,1.15);
    \draw[figtick,densely dotted] (1.8,.35)--(4.48,.35);
    \draw[figdim] (-.62,0)--(-.62,1.15)
      node[midway,left,figlbl] {$h_+$};
    \draw[figdim] (4.32,.35)--(4.32,1.15)
      node[midway,right,figlbl] {$e$};
    \draw[figseg,cbB] (0,0)--(3.6,0);
    \foreach \p in {(0,0),(3.6,0),(1.8,1.15),(1.8,.35)}
      \fill[black!75] \p circle[radius=1.5pt];
    \draw[figtick] (0,-.10)--(0,-1.58);
    \draw[figtick] (3.6,-.10)--(3.6,-1.58);
    \draw[figdim] (0,-1.48)--(3.6,-1.48)
      node[midway,fill=white,inner sep=1pt,figlbl] {$d$};
    \node[figlbl] at (1.8,-1.96) {$h_-=0,\qquad h_+\ge e$};
  \end{scope}
\end{tikzpicture}
  \caption{The same convex set $H$ and base segment of length $d$ in both
  cases of the chord bound. The triangles $\Delta_+$ and $\Delta_-$ have
  heights $h_+$ and $h_-$, whose sum is at least the perpendicular extent
  $e$ of $S$. In (b), $\Delta_-$ degenerates to the base segment.}
  \label{fig:chord-bound}
\end{figure}

We first bound the horizontal translations $t_{ix}$ using this chord estimate.
We know that the hull of worm $T$ has width at least $w=\sqrt3/4$ in every direction, because
one of its sides must project onto any direction with length at least
$\tfrac12\cos(\pi/6)=w$. We also know that the longest chord in $H$ is
its \emph{diameter} $\operatorname{diam}(H)=\max_{x,y\in H}|x-y|$.
Apply \cref{lem:chord} with a longest chord
of $H$ as the segment and the hull of $T$ as $S$. The perpendicular
extent of $S$ is at least $w$, so $\area(H)\ge w\,\operatorname{diam}(H)/2$,
and hence $\operatorname{diam}(H)\le2\area(H)/w$.

We can now bound the horizontal translations $t_{ix}$. Since the vertices of each hull were centered at the origin,
their average after placement is $t_i$, which lies in $H$ by convexity. The endpoints of $L$ are $e_-=(-1/2,0)$ and $e_+=(1/2,0)$.
The larger of the two horizontal separations from $t_i$ to these endpoints is
$|t_{ix}|+\tfrac12$, which is no greater than the
corresponding Euclidean distance. Since $t_i,e_-,e_+\in H$, both
endpoint distances are at most $\operatorname{diam}(H)$. Thus
\[
  |t_{ix}|+\tfrac12\le\max_\pm|t_i-e_\pm|
  \le\operatorname{diam}(H)\le2\area(H)/w.
\]
To bound the vertical translations $t_{iy}$, consider the triangle
$\conv(\{e_-,e_+,t_i\})$, which also lies in $H$ by convexity.
Its base $L$ has length one and its height is $|t_{iy}|$. Therefore
\[
  \tfrac12|t_{iy}|\le\area(H),
  \qquad\text{hence}\qquad |t_{iy}|\le2\area(H).
\]

Together with the angular reductions, these
bounds show that every placement with $\area(H)<\lbsym$ has a representative
with the same joint hull area in the compact box
\begin{equation}\label{eq:domain}
\begin{aligned}
 \mathcal D&=[0,\pi/3]\times[0,2\pi]\times[0,\pi]
       \times\bigl([-X,X]\times[-Y,Y]\bigr)^3,\\
 X&=\frac{2\lbsym}{w}-\frac12<0.603894,\qquad
 Y=2\lbsym=0.478.
\end{aligned}
\end{equation}

\subsection{Polygon area bounds}\label{sec:lb-fans}

For a fixed placement, we obtain lower bounds for $\area(H)$ from ordered lists of vertices drawn from the four placed hulls.
We distinguish lists with at most five vertices from longer lists.
Lists of at most five vertices admit bounds valid in every order, whereas longer lists require geometric conditions.

Write $u\times v=u_xv_y-u_yv_x$ and $v^\perp=(-v_y,v_x)$.
Let $\mathcal P=(P_0,\ldots,P_{m-1})$ be an ordered list of $m\ge3$
vertices from the four placed hulls, using each hull vertex at most once.
Different vertices may coincide after placement.
Define the signed shoelace expression
\[
  F_{\mathcal P}=\frac12\sum_{i=0}^{m-1}P_i\times P_{i+1},
  \qquad P_m=P_0.
\]
Write $\conv(\mathcal P)$ for the convex hull of these points.
For a simple polygon ordered counterclockwise, $F_{\mathcal P}$ is its
ordinary area. Our lists may instead cross themselves or be degenerate,
and their vertices need not lie on the boundary of $H$. We therefore
need to justify the \emph{shoelace bound} $F_{\mathcal P}\le\area(H)$.

\paragraph{At most five vertices.}
\begin{lemma}[small polygon bound]\label{lem:small-polygon}
Let $3\le m\le5$, and let $\mathcal P=(P_0,\ldots,P_{m-1})$
be a list of points in a convex set $H$, in any order. Then
\[
  F_{\mathcal P}\le\area(H).
\]
\end{lemma}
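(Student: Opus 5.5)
The plan is to compare $F_{\mathcal P}$ with $\area(\conv(\mathcal P))$, since $\conv(\mathcal P)\subseteq H$ by convexity. We exploit the fact that $F_{\mathcal P}$ is affine in each point separately: the terms involving $P_i$ are $P_{i-1}\times P_i+P_i\times P_{i+1}=P_i\times(P_{i+1}-P_{i-1})$. Throughout we write $[XYZ]$ for the area of the triangle $XYZ$.

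\textbf{Small cases.} The cases $m=3$ and $m=4$ are handled directly by a fan decomposition.
For $m=3$, $F_{\mathcal P}=\pm\area(\conv\{P_0,P_1,P_2\})$, which is at most $\area(H)$.
For $m=4$, write $F_{\mathcal P}=F(P_0P_1P_2)+F(P_0P_2P_3)$.
If both terms are positive, then $P_1$ and $P_3$ lie strictly on opposite sides of the line through $P_0P_2$. The two triangles then have disjoint interiors inside $H$, exactly as in the proof of \cref{lem:chord}, so their sum is at most $\area(H)$.
Otherwise one term is $\le0$, and $F_{\mathcal P}$ is bounded by the other triangle's area, hence by $\area(H)$.

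\textbf{Reduction for $m=5$.} Set $K=\conv(\mathcal P)$, a convex polygon. Because $F$ is affine in each $P_i$ separately, we can move the points one at a time: each move replaces a point by an extreme point of $K$ without decreasing $F$. After these moves, every $P_i$ is a vertex of $K$ and the hull can only shrink, so it still lies in $H$.
If two of the resulting points coincide, deleting the repeat leaves $F$ unchanged. Deleting it is exact because, if $P_i=P_{i+1}$, the term $P_i\times P_{i+1}$ vanishes. If the coincident points are not adjacent, we instead split the closed walk at the repeated point into two shorter closed walks. Their shoelace sums add to $F_{\mathcal P}$, and each has at most four vertices. We then need a small extra argument that the two pieces do not overlap positively, for example by using the winding number. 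So the reduced case is five distinct points in convex position, visited in some cyclic order.

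\textbf{Five points in convex position.} Up to rotation and reversal of the list, few orders remain. We express $F_{\mathcal P}=\int w(x)\,dx$, where $w$ is the winding number of the closed polygonal path.
For the convex order, $w\in\{0,1\}$ and $F_{\mathcal P}$ equals $\area(K)$ or is negative.
For orders with one or two "crossings", $w\in\{-1,0,1\}$ on $K$, so $F_{\mathcal P}\le\area(K)$.
The remaining case is the pentagram order $A,C,E,B,D$, where $A,\dots,E$ are the vertices of $K$ counterclockwise; here the central region has $w=2$. A fan computation gives
\[
F=[ACE]-[ABE]+[ABD]=2\area(K)-\bigl([ABC]+[BCD]+[CDE]+[DEA]\bigr).
\]
Since $[ACE]=\area(K)-[ABC]-[CDE]$ and $[ABD]=\area(K)-[BCD]-[DEA]$, the needed inequality $F\le\area(K)$ is equivalent to
\[
\area(K)\le[ABC]+[BCD]+[CDE]+[DEA]+[EAB],
\]
that is, the five ear triangles of a convex pentagon have total area at least the pentagon's area.

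\textbf{Main obstacle.} I expect this ear inequality to be the hardest step. My first attempt is to apply the same affine-in-one-vertex trick to the difference (sum of ears) minus (pentagon area). This difference is affine in $A$ while $A$ ranges over the region that keeps the pentagon convex. So it should suffice to check extreme positions of $A$, where $A$ becomes collinear with a neighbouring edge and the pentagon degenerates to a quadrilateral. In the degenerate case the inequality reduces to the quadrilateral statement, which is covered by the $m=4$ argument above.
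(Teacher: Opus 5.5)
Your argument is correct in outline. For $m=3,4$ it matches the paper: splitting along $P_0P_2$ is the chord bound in disguise. For $m=5$ you take a different route.

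After moving the points to hull vertices, the paper deletes the middle point $q$ of any consecutive triple with $(q-p)\times(r-q)\le0$, which cannot decrease $F_{\mathcal P}$. This absorbs all coincidences, since a repeat yields a zero edge or a triple $p,q,p$. A step count then shows that five distinct vertices with all turns positive are in convex or pentagram order. So the ten one- and two-crossing orders never need a winding-number analysis. The pentagram is settled by an affine normalization: its doubled signed area $x+w-1$ is dominated by one of three quadrilateral expressions, and the four-point bound applies again.

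Your version is more geometric. Winding numbers show why only the pentagram is hard, and the five-ear inequality is a clean reformulation. The paper's version is finite algebra, which is what makes it easy to formalize.

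Some steps still need filling in.

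\begin{itemize}
\item A non-adjacent coincidence among five points is at cyclic distance two. Your split therefore gives a two-point loop with shoelace sum $0$ plus a triangle, so no overlap argument is needed.
\item The claim $w\in\{-1,0,1\}$ for the one- and two-crossing orders is only asserted. It is true: in the one-crossing case you get a triangle and a quadrilateral of opposite orientations, and in the two-crossing case three triangles with disjoint interiors, two of one sign. Either check this or use the nonpositive-turn deletion instead.
\item Your display drops $-[EAB]$, though the ear inequality you derive is right.
\end{itemize}

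The most important gap is that ``check extreme positions of $A$'' is not automatic. The admissible set for $A$ is the intersection of three half-planes bounded by the lines $EB$, $BC$, $DE$. It is unbounded when $BC$ and $DE$ do not meet beyond $B$ and $E$.

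Using $\area(K)=[EAB]+\area(BCDE)$, the difference equals $[ABC]+[DEA]+[BCD]+[CDE]-\area(BCDE)$. Its $A$-dependent part $[ABC]+[DEA]$ is nonnegative on that set, so the minimum is attained at a vertex. The vertices are:
\begin{itemize}
\item $A=B$, with value $[CDE]$;
\item $A=E$, with value $[BCD]$;
\item when it exists, the point $X$ where lines $BC$ and $DE$ meet.
\end{itemize}
At $X$ the pentagon degenerates to a triangle, not a quadrilateral, so your appeal to the $m=4$ case does not apply there. The value is $[CDE]-[BDE]$, which is nonnegative because $B$ lies on segment $XC$ and $X$ lies on line $DE$. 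With these additions the argument is complete.
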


\begin{proof}
For $m=3$, the shoelace expression is the triangle's area with a sign
determined by the vertex order. The triangle, possibly degenerate,
lies in $H$ by convexity. Hence
\[
  F_{\mathcal P}\le|F_{\mathcal P}|
    =\area(\conv(\mathcal P))\le\area(H).
\]
For $m=4$, expanding the four terms of the shoelace sum gives
\[
  F_{\mathcal P}=\tfrac12(P_2-P_0)\times(P_3-P_1).
\]
This expresses the signed area in terms of the two diagonals.
If $P_0=P_2$, the expression is zero. Otherwise put
$d=|P_2-P_0|$ and let $e$ be the extent of $[P_1,P_3]$ perpendicular
to $[P_0,P_2]$. The cross product has magnitude $de$, and both
segments lie in $H$ by convexity. Thus \cref{lem:chord} gives
\[
  |F_{\mathcal P}|=\frac{de}{2}\le\area(H).
\]
The diagonals need not intersect, so this also covers crossed and
degenerate quadrilaterals. For $m=5$, the proof is given in \cref{app:five-point}.
\end{proof}

Thus the small cases
require no sign or nondegeneracy conditions for the polygon bound.

\paragraph{More than five vertices.}
An analogous bound for arbitrary ordered points fails already at $m=6$.
Tracing a nondegenerate triangle counterclockwise twice gives
twice its hull area. Repeated point locations are allowed,
as noted above. We therefore use geometric conditions for longer lists.
The first is a direct fan condition which ensures that the triangles have
disjoint interiors (\cref{fig:fan-corner}(a)).
This condition holds for every $m \ge 3$, even though \cref{lem:small-polygon}
already covers the cases $m \le 5$.

\begin{lemma}[fan bound]\label{lem:fan}
Let $m\ge3$ and set $\xi_i=P_i-P_0$, the displacement vectors from
$P_0$ to the remaining vertices. If
\begin{equation}\label{eq:fan}
  \xi_1\times\xi_i>0\quad(2\le i\le m-1),\qquad
  \xi_i\times\xi_{i+1}>0\quad(2\le i\le m-2),
\end{equation}
then $0<F_{\mathcal P}\le\area(H)$.
\end{lemma}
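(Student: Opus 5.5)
The plan is to decompose $F_{\mathcal P}$ as a fan of triangles from $P_0$ and show that these triangles have pairwise disjoint interiors inside $H$.

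\textbf{Decomposition.} Translating by $-P_0$ does not change the shoelace expression, so the standard identity gives
\[
  F_{\mathcal P}=\tfrac12\sum_{i=1}^{m-2}\xi_i\times\xi_{i+1}.
\]
The first term $\xi_1\times\xi_2$ is positive by the case $i=2$ of the first condition in \eqref{eq:fan}. The remaining terms are positive by the second condition. Hence $F_{\mathcal P}>0$, and $F_{\mathcal P}=\sum_i\area(\Delta_i)$, where $\Delta_i=\conv\{P_0,P_i,P_{i+1}\}$ is a nondegenerate, counterclockwise-oriented triangle. Each $\Delta_i$ lies in $H$ by convexity.

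\textbf{Angular ordering.} It remains to show that the $\Delta_i$ have pairwise disjoint interiors. Then $\sum\area(\Delta_i)=\area\bigl(\bigcup\Delta_i\bigr)\le\area(H)$. Each $\xi_i$ is nonzero because $\xi_1\times\xi_i>0$. Let $\theta_i$ be the angle of $\xi_i$ measured counterclockwise from $\xi_1$, with $\theta_1=0$. The condition $\xi_1\times\xi_i>0$ places every $\xi_i$ with $i\ge2$ strictly in the open upper half-plane relative to $\xi_1$, so $\theta_i\in(0,\pi)$. For $i\ge2$, both $\theta_i$ and $\theta_{i+1}$ lie in $(0,\pi)$, so their difference lies in $(-\pi,\pi)$. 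On that range, $\xi_i\times\xi_{i+1}>0$ is equivalent to $\theta_{i+1}>\theta_i$. Therefore
\[
  0=\theta_1<\theta_2<\cdots<\theta_{m-1}<\pi .
\]

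\textbf{Disjoint interiors.} The interior of $\Delta_i$ lies in the open cone $\{P_0+r(\cos\theta,\sin\theta)\text{ in the frame of }\xi_1 : r>0,\ \theta_i<\theta<\theta_{i+1}\}$. These cones are pairwise disjoint because the angular intervals $(\theta_i,\theta_{i+1})$ are disjoint and each has length less than $\pi$. The triangles therefore have disjoint interiors, and the bound follows.

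The main obstacle is the angular-ordering step. Turning the cross-product sign conditions into a monotone sequence of angles needs the half-plane confinement from the first condition, and that is exactly why it is imposed against $\xi_1$ for all $i$. Without it, a fan could wind around $P_0$ more than once, as in the doubled-triangle example. I would state that reduction carefully: on $(0,\pi)$ one has $\operatorname{sign}(u\times v)=\operatorname{sign}(\theta_v-\theta_u)$. The remaining steps are routine.
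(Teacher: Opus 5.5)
Your proposal is correct and follows the paper's proof essentially step for step. Both arguments rewrite the shoelace sum as the fan sum $\tfrac12\sum_{i=1}^{m-2}\xi_i\times\xi_{i+1}$, use the first condition to place the angles in $(0,\pi)$ and the second to make them strictly increase, and then conclude that the fan triangles have disjoint interiors inside $H$. You additionally spell out the translation invariance and the sign-of-sine equivalence, which the paper leaves implicit. One small correction: the cones are disjoint because all the angular intervals lie in $[0,\pi)$, so there is no wraparound; the fact that each interval is shorter than $\pi$ is what places each triangle's interior inside its cone.
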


\begin{proof}
The first inequalities show that every $\xi_i$ with $i\ge2$ lies in the open
half-plane to the left of $\xi_1$. Their angles from $\xi_1$ lie in
$(0,\pi)$, and the remaining inequalities make these angles strictly
increase. The triangles $\conv(\{P_0,P_i,P_{i+1}\})$ with
$1\le i\le m-2$ therefore have disjoint interiors. Each lies in
$\conv(\mathcal P)\subseteq H$, and their areas sum to
\[
  \frac12\sum_{i=1}^{m-2}\xi_i\times\xi_{i+1}=F_{\mathcal P}.
\]
\end{proof}

A separating chord provides another way to justify the same shoelace bound by
splitting the list into two smaller lists and bounding each one (\cref{fig:chord-cut}).
This can apply when the direct fan conditions (\cref{lem:fan}) fail.

\begin{lemma}[chord decomposition]\label{lem:chord-cut}
For $2\le j\le m-2$, put
\[
  \mathcal P_1=(P_0,\ldots,P_j),\qquad
  \mathcal P_2=(P_j,\ldots,P_{m-1},P_0),\qquad
  H_1=\conv(\mathcal P_1),\qquad
  H_2=\conv(\mathcal P_2).
\]
Suppose $P_0\ne P_j$ as points, the two lists lie in opposite closed
half-planes bounded by the line $P_0P_j$, and
$F_{\mathcal P_i}\le\area(H_i)$ for $i=1,2$. Then
\[
  F_{\mathcal P}=F_{\mathcal P_1}+F_{\mathcal P_2}
  \le\area(\conv(\mathcal P))\le\area(H).
\]
\end{lemma}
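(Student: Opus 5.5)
The proof has three parts. First comes an algebraic identity: splitting the shoelace sum at the chord gives $F_{\mathcal P}=F_{\mathcal P_1}+F_{\mathcal P_2}$. Second, each $\area(H_i)$ is bounded by the area of its half of $\conv(\mathcal P)$. Third, the two halves have disjoint interiors, so their areas add to at most $\area(\conv(\mathcal P))$; the final inequality $\area(\conv(\mathcal P))\le\area(H)$ is immediate because all $P_i$ lie in the convex set $H$.

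For the identity, I will write out the cyclic sums. The list $\mathcal P_1$ contributes $\sum_{i=0}^{j-1}P_i\times P_{i+1}$ plus the closing term $P_j\times P_0$. The list $\mathcal P_2$ contributes $\sum_{i=j}^{m-1}P_i\times P_{i+1}$ (with $P_m=P_0$) plus its closing term $P_0\times P_j$. The two closing terms cancel by antisymmetry of $\times$. The remaining terms are exactly the $m$ terms of $F_{\mathcal P}$, so after halving we get $F_{\mathcal P}=F_{\mathcal P_1}+F_{\mathcal P_2}$.

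For the area inequality, let $\ell$ be the line $P_0P_j$, which is well defined because $P_0\ne P_j$. Let $\Pi_1,\Pi_2$ be the opposite closed half-planes containing $\mathcal P_1$ and $\mathcal P_2$ respectively. By convexity of half-planes, $H_1\subseteq\Pi_1$ and $H_2\subseteq\Pi_2$. Both hulls are also contained in $\conv(\mathcal P)$. Hence $H_1\cup H_2\subseteq\conv(\mathcal P)$, and $H_1\cap H_2\subseteq\Pi_1\cap\Pi_2=\ell$, which has Lebesgue measure zero. Additivity of measure then gives
\[
  \area(H_1)+\area(H_2)=\area(H_1\cup H_2)\le\area(\conv(\mathcal P)).
\]
Combining this with the hypotheses $F_{\mathcal P_i}\le\area(H_i)$ yields the claim.

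I expect no real obstacle. The only points needing care are bookkeeping. The endpoints $P_0$ and $P_j$ appear in both sublists, but they lie on $\ell$, so this is consistent with the half-plane hypothesis. The sublists may be degenerate, since $j\ge2$ and $m-j\ge2$ guarantee that each has at least three entries; in that case the hypothesis $F_{\mathcal P_i}\le\area(H_i)=0$ still applies. The hypothesis $P_0\ne P_j$ is used solely to make $\ell$ a genuine line, so that the overlap has measure zero.
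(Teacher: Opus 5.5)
Your proposal is correct and matches the paper's proof: the two closing chord terms $P_j\times P_0$ and $P_0\times P_j$ cancel to give $F_{\mathcal P}=F_{\mathcal P_1}+F_{\mathcal P_2}$, and $H_1,H_2$ lie in opposite closed half-planes, so they meet only on the chord line. Hence $\area(H_1)+\area(H_2)=\area(H_1\cup H_2)\le\area(\conv(\mathcal P))\le\area(H)$. You also spell out the bookkeeping the paper leaves implicit: each sublist has at least three entries, and $P_0\ne P_j$ is needed for the line to be well defined.
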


\begin{proof}
The added edges $P_jP_0$ and $P_0P_j$ cancel in the shoelace sums.
The intersection $H_1\cap H_2$ lies on the chord line, so has area
zero. Hence $\area(H_1)+\area(H_2)=\area(H_1\cup H_2)
\le\area(\conv(\mathcal P))$.
\end{proof}

Changing the starting vertex while keeping the cyclic order preserves
$F_{\mathcal P}$, so a cut according to \cref{lem:chord-cut} may start at any vertex.
Repeated valid cuts reduce the proof to lists that have at most five
vertices (\cref{lem:small-polygon}) or satisfy the fan conditions (\cref{lem:fan}).
Each bound is applied to the convex hull of that list.

Next, we use these shoelace bounds to certify $\area(H)\ge\lbsym$
throughout a \emph{box} of placements.

\begin{figure}[!htbp]
  \centering
  \begin{tikzpicture}[x=1.35cm,y=1.35cm,line join=round,line cap=round]
  \coordinate (p0) at (0,0);
  \coordinate (p1) at (.7,-1.1);
  \coordinate (p2) at (3.1,-1);
  \coordinate (p3) at (4,0);
  \coordinate (p4) at (3.2,1.1);
  \coordinate (p5) at (1,.95);
  \coordinate (p6) at (.7,0);
  \path[fill=cbAcc!14] (p0)--(p1)--(p2)--(p3)--cycle;
  \path[fill=cbPur!18] (p0)--(p3)--(p4)--(p5)--cycle;
  \draw[figaxis,densely dashed] (-.35,0)--(4.35,0);
  \draw[figseg,cbAcc] (p0)--(p1)--(p2)--(p3);
  \draw[figseg,cbPur!85!black] (p3)--(p4)--(p5)--(p6)--(p0);
  \draw[cbPur!75!black,densely dashed,line width=.45pt] (p5)--(p0);
  \draw[black!65,line width=.65pt] (p0)--(p3);
  \draw[->,cbAcc,line width=.75pt] (2.55,-.12)--(1.65,-.12);
  \draw[->,cbPur!85!black,line width=.75pt] (1.65,.12)--(2.55,.12);
  \node[figlbl,text=cbAcc!70!black] at (2,-.63) {$H_1$};
  \node[figlbl,text=cbPur!70!black] at (2.15,.65) {$H_2$};
  \foreach \i in {0,...,6}
    \fill[black!75] (p\i) circle[radius=1.5pt];
  \node[figlbl,below left] at (p0) {$P_0$};
  \node[figlbl,below left] at (p1) {$P_1$};
  \node[figlbl,below right] at (p2) {$P_2$};
  \node[figlbl,below right] at (p3) {$P_3$};
  \node[figlbl,above right] at (p4) {$P_4$};
  \node[figlbl,above left] at (p5) {$P_5$};
  \node[figlbl,below] at (p6) {$P_6$};
\end{tikzpicture}
  \caption{A cut with $m=7$ and $j=3$. The child hulls meet only on
  the chord line, and the closing chord edges have opposite orientations.
  The dashed segment $P_0P_5$ completes the boundary of $H_2$ and
  $P_6$ lies on the chord but is not a cut endpoint.}
  \label{fig:chord-cut}
\end{figure}

\subsection{Certifying a placement box}\label{sec:lb-box}

A placement box $\mathcal B$ specifies a closed interval for each of
the three angles and six translation coordinates. We must prove
$\area(H)\ge\lbsym$ for every placement in this box.
The vertex choices and their order in each list remain fixed, while
their positions and shoelace expressions vary. We first choose a
weighted shoelace expression, then use polynomial bounds to verify
the geometric conditions and the numerical threshold throughout
$\mathcal B$.

\paragraph{Combining shoelace bounds.}
One list suffices if $\lbsym\le F_{\mathcal P}\le\area(H)$ throughout
$\mathcal B$. Different shoelace expressions may vary in opposite
directions, so a weighted combination can give a stronger uniform
lower bound. For finitely many lists $\mathcal P_\nu$, choose fixed
rational weights and set
\[
  F=\sum_\nu\lambda_\nu F_{\mathcal P_\nu},\qquad
  \lambda_\nu\ge0,\qquad \sum_\nu\lambda_\nu\le1.
\]
If each bound $F_{\mathcal P_\nu}\le\area(H)$ holds throughout
$\mathcal B$, the weight conditions give
\[
  F\le\Bigl(\sum_\nu\lambda_\nu\Bigr)\area(H)\le\area(H).
\]
It remains to certify the geometric conditions for every list and
prove $F\ge\lbsym$ throughout $\mathcal B$.

\paragraph{Enclosing rotations.}
We replace each angle by cosine and sine coordinates and enclose
the allowed arc by a rectangle (\cref{fig:fan-corner}(b)). These
coordinates are then allowed to vary independently. Together with
the six translation coordinates, this gives twelve variables.

\begin{lemma}[rotation reduction]\label{lem:rotation-reduction}
For $i\in\{T,U,C\}$, suppose the angle interval in $\mathcal B$ is
contained in $[\phi_i^0-\rho_i,\phi_i^0+\rho_i]$, where
$0\le\rho_i<\pi/2$. Write
$c_i=\cos(\phi_i-\phi_i^0)$ and $s_i=\sin(\phi_i-\phi_i^0)$.
Every fixed linear combination $F$ of shoelace expressions agrees,
on actual placements, with a polynomial $\widetilde F$ in the twelve
variables $(t_{ix},t_{iy},c_i,s_i)$ for $i=T,U,C$.
This polynomial has total degree at most two and is affine in each
variable separately (multiaffine). The new coordinates satisfy
\begin{equation}\label{eq:rotation-box}
  c_i\in[\cos\rho_i,1],\qquad
  s_i\in[-\sin\rho_i,\sin\rho_i].
\end{equation}
\end{lemma}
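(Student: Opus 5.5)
The plan is to write every placed vertex explicitly in the new coordinates, then read off the degree and multiaffine structure from the shoelace formula. Fix $i\in\{T,U,C\}$. By the angle-addition formula, $R(\phi_i)=R(\phi_i^0)R(\phi_i-\phi_i^0)$ and $R(\phi_i-\phi_i^0)=\begin{pmatrix}c_i&-s_i\\ s_i&c_i\end{pmatrix}$. Hence each placed vertex is
\[
  t_i+R(\phi_i)v_{i,j}=t_i+c_i\,a_{i,j}+s_i\,b_{i,j},
  \qquad a_{i,j}=R(\phi_i^0)v_{i,j},\quad b_{i,j}=R(\phi_i^0)v_{i,j}^\perp,
\]
where $a_{i,j}$ and $b_{i,j}$ are constant vectors for the box. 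Each coordinate of a placed vertex of hull $i$ is therefore an affine function of $(t_{ix},t_{iy},c_i,s_i)$. Moreover, it is linear in the \emph{combined} variables $t_{ix},t_{iy},c_i,s_i$, up to the constant term, which is zero since we have no constant besides $t_i$. The vertices of $L$ are fixed constants.

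Next we expand the shoelace expression. $F_{\mathcal P}$ is a sum of terms $P_k\times P_{k+1}$, and the cross product is bilinear. A term $P\times Q$ with $P$ from hull $i$ and $Q$ from hull $i'$ is a sum of products of one variable (or constant) from hull $i$ and one from hull $i'$, so its degree is at most two. When $i\ne i'$, each individual variable appears to degree at most one in every monomial, because the two factors use disjoint variable sets. When $i=i'$, we must check that no monomial such as $c_i^2$, $s_i^2$, $c_is_i$, $t_{ix}c_i$, or $t_{ix}t_{iy}$ survives. To do this, write $P=t_i+c_ia+s_ib$ and $Q=t_i+c_ia'+s_ib'$ and use antisymmetry. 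We get $t_i\times t_i=0$, and $c_i^2$ appears with coefficient $a\times a'=R(\phi^0)v\times R(\phi^0)v'=v\times v'$, which is a constant. Rotations preserve the cross product, and $v^\perp\times v'^\perp=v\times v'$, so the $c_i^2$ and $s_i^2$ coefficients are both $v\times v'$. These combine to $(c_i^2+s_i^2)(v\times v')$, which equals the constant $v\times v'$ on actual placements. The mixed $c_is_i$ coefficient is $a\times b'+b\times a'=v\times v'^\perp+v^\perp\times v'$, which vanishes identically. The cross terms $t_i\times(c_ia'+s_ib')+(c_ia+s_ib)\times t_i$ are bilinear in a translation component and a rotation component, for instance $t_{ix}c_i$. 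These are of degree two but are still affine in each variable separately, so they are allowed.

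We therefore define $\widetilde F$ by replacing each same-hull product using $c_i^2+s_i^2=1$, which removes every squared term. The remaining monomials are constants, single variables, and products of two distinct variables. This polynomial has total degree at most two, is multiaffine, and agrees with $F$ whenever $c_i^2+s_i^2=1$. Linear combinations with fixed weights preserve all of these properties. The box constraints \eqref{eq:rotation-box} follow because $|\phi_i-\phi_i^0|\le\rho_i<\pi/2$: cosine is decreasing in absolute value on this range, giving $c_i\in[\cos\rho_i,1]$, and sine is monotone there, giving $|s_i|\le\sin\rho_i$.

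The main obstacle is the same-hull bookkeeping: verifying that the quadratic terms $c_i^2$, $s_i^2$, $c_is_i$ combine exactly into the constraint $c_i^2+s_i^2=1$ and cancel, using the fact that rotations preserve the cross product. The other step to be careful about is observing that $t_{ix}t_{iy}$ cannot arise, because $t_i\times t_i=0$.
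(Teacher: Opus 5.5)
Your proposal is correct and follows essentially the same route as the paper. Angle addition makes each placed vertex affine in $(t_{ix},t_{iy},c_i,s_i)$. Cross-hull products are multiaffine because the two factors use disjoint variable groups, and same-hull products reduce to $t_i\times R(\phi_i)(v_{i,k}-v_{i,j})+v_{i,j}\times v_{i,k}$. The only difference is the order of steps: the paper applies rotation invariance of the cross product to $R(\phi_i)$ before introducing $c_i,s_i$, so no quadratic terms ever arise. You instead expand first, check that the $c_i^2$, $s_i^2$ and $c_is_i$ coefficients collapse to $(c_i^2+s_i^2)\,v\times v'$, and then impose $c_i^2+s_i^2=1$, which is the same identity.
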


\begin{proof}
Fix a hull $i$. The angle-addition formulas give
\[
  R(\phi_i)v_{i,j}
    =c_iR(\phi_i^0)v_{i,j}
     +s_i\bigl(R(\phi_i^0)v_{i,j}\bigr)^\perp.
\]
Since $\phi_i^0$ is fixed, each placed vertex
$t_i+R(\phi_i)v_{i,j}$ is affine in $t_{ix},t_{iy},c_i,s_i$.
For two vertices $v_{i,j},v_{i,k}$ of the same hull, expansion and
invariance of cross products under rotation give
\[
\begin{aligned}
  &(t_i+R(\phi_i)v_{i,j})\times(t_i+R(\phi_i)v_{i,k})\\
  &\qquad =t_i\times\bigl(R(\phi_i)(v_{i,k}-v_{i,j})\bigr)
            +v_{i,j}\times v_{i,k}.
\end{aligned}
\]
The last term is constant. In the first term, a translation
coordinate is multiplied by at most one of $c_i,s_i$, so no squared
variable remains. We use this identity before treating $c_i,s_i$
as independent variables.

For vertices from different hulls, the two factors involve disjoint
groups of variables. Their cross product therefore also has degree
at most two and is affine in each variable. Edges involving fixed
endpoints are affine. Summing the edge expressions gives
$\widetilde F$.

The cosine and sine bounds in \cref{eq:rotation-box} follow from
$-\rho_i\le\phi_i-\phi_i^0\le\rho_i$.
Cosine is even and decreases on $[0,\rho_i]$, while sine increases
on $[-\rho_i,\rho_i]$ because $\rho_i<\pi/2$.
\end{proof}

A lower bound for $\widetilde F$ on the product of these six intervals
and the six translation intervals is therefore valid for $F$ on
$\mathcal B$.

\begin{figure}[t]
  \centering
  \begingroup
\pgfmathsetmacro{\cornerh}{50}
\pgfmathsetmacro{\cornerc}{cos(\cornerh)}
\pgfmathsetmacro{\corners}{sin(\cornerh)}
\begin{tikzpicture}[line join=round,line cap=round]
  \node[figlbl] at (1.85,3.45) {(a) Ordered fan};
  \node[figlbl] at (7.75,3.45) {(b) Rotation enclosure};
  \begin{scope}[scale=1.35]
    \coordinate (p0) at (0,0);
    \coordinate (p1) at (2.8,.1);
    \coordinate (p2) at (2.1,.8);
    \coordinate (p3) at (2.3,1.6);
    \coordinate (p4) at (1,1.9);
    \coordinate (p5) at (.15,1.15);
    \path[fill=black!4,draw=black!45,line width=.5pt]
      (p0)--(p1)--(3,1.25)--(1.65,2.15)--(.35,1.95)--(-.3,.8)--cycle;
    \foreach \i/\j/\tone in {1/2/12,2/3/25,3/4/12,4/5/25}{
      \path[fill=cbB!\tone] (p0)--(p\i)--(p\j)--cycle;
    }
    \foreach \i in {2,3,4}
      \draw[cbB!65,line width=.45pt] (p0)--(p\i);
    \draw[figseg,cbB]
      (p0)--(p1)--(p2)--(p3)--(p4)--(p5)--cycle;
    \foreach \i in {0,...,5}
      \fill[cbB] (p\i) circle[radius=1.65pt];
    \node[figlbl,below left] at (p0) {$P_0$};
    \node[figlbl,below right] at (p1) {$P_1$};
    \node[figlbl,above left] at (p2) {$P_2$};
    \node[figlbl,above right] at (p3) {$P_3$};
    \node[figlbl] at (1.13,1.66) {$P_4$};
    \node[figlbl] at (.28,.96) {$P_5$};
    \node[figlbl,text=black!60] at (2.75,1.05) {$H$};
    \node[figlbl] at (1.37,-.62) {$F_{\mathcal P}\le\operatorname{area}(H)$};
  \end{scope}
  \begin{scope}[shift={(6.3,1.4)},scale=1.8]
    \path[fill=cbAcc!10] (\cornerc,-\corners) rectangle (1,\corners);
    \draw[->,figaxis] (-.10,0)--(1.28,0) node[below,figticklbl] {$c$};
    \draw[->,figaxis] (0,-.93)--(0,.97) node[left,figticklbl] {$s$};
    \draw[figlead] (0,0)--(\cornerc,\corners);
    \draw[figlead] (0,0)--(\cornerc,-\corners);
    \draw[figlead] (.30,0) arc[start angle=0,end angle=\cornerh,radius=.30];
    \node[figticklbl] at (.39,.18) {$\rho$};
    \draw[densely dotted,black!35,line width=.4pt]
      (\cornerc,\corners) arc[start angle=\cornerh,end angle=67,radius=1];
    \draw[densely dotted,black!35,line width=.4pt]
      (\cornerc,-\corners) arc[start angle=-\cornerh,end angle=-67,radius=1];
    \draw[cbAcc,line width=.55pt]
      (\cornerc,-\corners) rectangle (1,\corners);
    \draw[figseg,cbB]
      (\cornerc,-\corners) arc[start angle=-\cornerh,end angle=\cornerh,radius=1];
    \foreach \x in {\cornerc,1}{
      \foreach \y in {-\corners,\corners}
        \fill[cbAcc] (\x,\y) circle[radius=1.8pt];
    }
    \coordinate (rotation) at ({cos(22)},{sin(22)});
    \fill[cbB] (rotation) circle[radius=1.6pt];
    \node[figlbl,anchor=west] at (1.04,.38) {$(c,s)$};
    \draw[figtick] (\cornerc,-\corners)--(\cornerc,-.87);
    \draw[figtick] (1,-\corners)--(1,-.87);
    \node[figticklbl] at ({\cornerc-.055},-1.02) {$\cos\rho$};
    \node[figticklbl] at (1.055,-1.02) {$1$};
    \node[figticklbl,anchor=west] at (1.08,\corners) {$\sin\rho$};
    \node[figticklbl,anchor=west] at (1.08,-\corners) {$-\sin\rho$};
    \node[figlbl] at (.81,-1.32)
      {$(c,s)=(\cos\delta,\sin\delta),\quad |\delta|\le\rho$};
  \end{scope}
\end{tikzpicture}
\endgroup
  \caption{(a) Fan triangles with disjoint interiors lie in the joint hull $H$.
  (b) A rotation arc lies in the rectangle
  $[\cos\rho,1]\times[-\sin\rho,\sin\rho]$ used for the polynomial bound.
  Points of the rectangle need not represent rotations.}
  \label{fig:fan-corner}
\end{figure}

\paragraph{Bounding the polynomial.}
List the coordinates $(t_{ix},t_{iy},c_i,s_i)$ for $i=T,U,C$ as
$x_1,\ldots,x_{12}$, in that order. The translation intervals come
from $\mathcal B$ and the rotation bounds from \cref{eq:rotation-box}.
Enclose each coordinate in $[m_j-r_j,m_j+r_j]$, with rational center
$m_j$ and rational radius $r_j\ge0$, and write
\[
  x_j=m_j+r_ju_j,\qquad |u_j|\le1.
\]
After expansion and collection of equal monomials, the polynomial
has only constant terms, single variables and products of two
distinct variables.
\[
  \widetilde F(x_1,\ldots,x_{12})
  =a_0+\sum_{j=1}^{12}a_ju_j
      +\sum_{1\le j<k\le12}a_{jk}u_ju_k.
\]
Enclose each coefficient by rational endpoints, denoting the lower
endpoint by an underline and the upper endpoint by an overline.
Then
\begin{equation}\label{eq:coefficient-bound}
\begin{aligned}
  \widetilde F(x_1,\ldots,x_{12})\ge{}&
  \underline a_0-
  \sum_{j=1}^{12}\max\{|\underline a_j|,|\overline a_j|\}\\
  &-\sum_{1\le j<k\le12}
       \max\{|\underline a_{jk}|,|\overline a_{jk}|\}.
\end{aligned}
\end{equation}
Since $|u_j|\le1$ and $|u_ju_k|\le1$, each nonconstant term is
bounded below by the negative of its coefficient's maximum possible
magnitude. The estimate holds for every choice of coordinates in
their intervals, hence for every placement in $\mathcal B$.

\paragraph{Checking the geometric conditions.}
The fan and chord conditions reduce to sign tests for determinants
such as $(P_j-P_0)\times(P_k-P_0)$. Each is twice a triangle's
shoelace expression, so the same polynomial bound applies.
We first try direct interval evaluation, cancelling translations
in differences between vertices of the same hull. If this does not
establish the required sign, we use \cref{eq:coefficient-bound}.

Fan conditions require strict positivity. Chord side conditions
allow zero. Distinct endpoints are certified by a coordinate
difference whose interval excludes zero, or by a strict determinant
sign with a third vertex.

\paragraph{Checking the area threshold.}
We apply \cref{eq:coefficient-bound} to $\widetilde F$ to certify
$F\ge\lbsym$. We first combine like terms in $x_j$, substitute
$x_j=m_j+r_ju_j$, and combine like terms in $u_j$.
If this bound is below $\lbsym$, we instead substitute into the
original terms of $\widetilde F$ and combine like terms afterward.
Both orders give valid lower bounds. Rounding can make them differ.

If the geometric checks pass and one of these lower bounds is at
least $\lbsym$, then $\area(H)\ge\lbsym$ throughout $\mathcal B$.

\subsection{The finite certificate}\label{sec:lb-certificate}

\Cref{sec:lb-box} explains how to certify one placement box. We now
apply this method to the reduced domain $\mathcal D$ of the three
angles and six translation coordinates (\cref{eq:domain}).

The computer search starts from a rational box containing $\mathcal D$.
On each box it chooses the following data.
\begin{enumerate}
  \item \emph{Vertex lists.} Each list selects some of the
  $2+3+4+4=13$ labeled vertices of $L,T,U,C$ and fixes their order.
  The search uses triangles and four-point lists, together with
  lists suggested by the convex hull of the approximate midpoint
  placement.
  \item \emph{Rational weights.} A single list is tested with
  weight one. To combine lists, numerical linear programming seeks
  weights that improve an approximation to the coefficient bound
  in \cref{eq:coefficient-bound}. The proposed weights are converted
  to nonnegative rational values summing to one.
\end{enumerate}
The twelve variables in \cref{sec:lb-box} describe the placements
of $T,U,C$ using six translation coordinates and three
cosine--sine pairs. They determine the positions of the vertices,
while $L$ remains fixed.

A box becomes
a \emph{leaf} only when the shoelace bounds of \cref{sec:lb-fans}
and the interval checks of \cref{sec:lb-box} establish
$\lbsym\le F\le\area(H)$ throughout the box.
Otherwise, the search chooses one of the nine coordinates using interval widths and
polynomial coefficients, splits its interval near the midpoint, and
continues on both child boxes. Their union is the parent box.

A \emph{finite certificate} records the completed subdivision tree
and the vertex lists, weights, and any chord decompositions at each
leaf. Acceptance proves the area bound throughout the root box.
Each accepted leaf proves it on its own box, and the two children
of a split cover their parent. Induction on the finite tree
therefore gives $\area(H)\ge\lbsym$ throughout the root and hence
throughout $\mathcal D$.

The supplied certificate is accepted by the Lean verification
described in \cref{sec:verification}. \Cref{app:lb-checks} gives
the checking procedure, and \cref{tab:certificates} records the
certificate statistics.

\begin{proof}[Proof of \cref{thm:lb}]
Suppose a convex set of area less than $\lbsym$ covers $\Sfam$.
Choose a copy of each arc inside it. By convexity, the set contains
their joint hull $H$, so $\area(H)<\lbsym$.
By \cref{sec:lb-domain}, there is a placement in $\mathcal D$ with
the same joint hull area. The leaf boxes cover $\mathcal D$, so this
placement belongs to one of them. Its recorded bounds, justified by
\cref{sec:lb-fans,sec:lb-box}, give
\[
  \lbsym\le F\le\area(H)<\lbsym,
\]
a contradiction. Every convex universal cover covers $\Sfam$, so its
area is at least $\lbsym$. Taking the infimum over these covers gives
$\aopt\ge\lbsym = \lboundvalue$.
\end{proof}

\section{The upper bound}\label{sec:ub}

We construct a quadrilateral $\Kub$ and prove that it covers every unit
arc. An uncovered arc would give an uncovered polygon with a visiting path of length at most one
(\cref{sec:ub-witness}). We express failure to cover this polygon through
support inequalities (\cref{sec:ub-escape}), then relate their weighted
sums to the visiting length (\cref{sec:ub-length}). A finite certificate
combines these inequalities to force a length greater than one
(\cref{sec:ub-certificate}).

For $\sigma\in\mathbb Q$, the rational parametrization
\[
  n(\sigma)=\left(\frac{1-\sigma^2}{1+\sigma^2},
                    \frac{2\sigma}{1+\sigma^2}\right)
\]
gives a unit vector. Define
\begin{equation}\label{eq:K}
  \Kub=\{x\in\R^2:n_i\cdot x\le h_i,\ 0\le i\le3\},
  \qquad n_i=n(\sigma_i),
\end{equation}
where $n_i$ is the outward unit normal and $h_i$ the offset of side $i$.
We use the following rational data.
\[
\begin{array}{c|rrrr}
 i&0&1&2&3\\ \hline
 \sigma_i&-631/58&-1&7/16&37/56\\
 h_i&0&0&130783/250000&446199/1000000
\end{array}
\]
Write $V_1,\ldots,V_4$ for the intersections of consecutive side
lines, starting with lines $0,1$ (\cref{fig:K}).
Side $0$ is $[V_4,V_1]$, and side $i$ is $[V_i,V_{i+1}]$ for $i=1,2,3$.

\begin{theorem}\label{thm:ub}
The quadrilateral $\Kub$ covers every unit arc. Its area is
\[
  \area(\Kub)=\uboundvalue=\ubounddecimal.
\]
\end{theorem}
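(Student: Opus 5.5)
The statement has two parts. The area formula is a direct computation, and universality is proved by contradiction via a finite certificate, following the outline at the start of \cref{sec:ub}.

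\emph{Area.} With the rational data, each $n_i=n(\sigma_i)$ has rational coordinates. Each vertex $V_i$ is the solution of a $2\times2$ rational linear system $n_{i-1}\cdot x=h_{i-1}$, $n_i\cdot x=h_i$, with indices mod $4$. I would solve these exactly, check that the four vertices occur in counterclockwise order and that each satisfies the two remaining inequalities, so $\Kub$ really is this convex quadrilateral. Then the shoelace formula $\frac12\sum V_i\times V_{i+1}$ gives the stated rational number. This is routine exact arithmetic; $V_1$ is the origin since $h_0=h_1=0$.

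\emph{Universality: reduction to a polygon.} Suppose a unit arc $\gamma$ is not covered by $\Kub$. Since $\Kub$ is convex, covering $\tr(\gamma)$ is the same as covering $\conv(\tr(\gamma))$. By compactness of $E(2)$ restricted to a bounded region, there is a margin of non-coverage. So a finite set of points on $\gamma$, taken in the order $\gamma$ visits them, already cannot be placed in $\Kub$, even after a small shrinking. The polygonal path through these points has length at most $1$, and by scaling we may assume it is strictly below $1$ or exactly $1$ as convenient. Thus it suffices to show: every finite ordered point list whose polygonal visiting path has length $\le1$ fits in $\Kub$.

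\emph{Support inequalities and length.} Fix such a point list $P$. For each rigid motion $g$ (rotation angle $\theta$, with reflections handled by symmetry), failure means that for some $i$ the support value $\max_{p}n_i\cdot g(p)$ exceeds what the translation freedom permits. Concretely, for each direction the widths $w_P(u)=\max u\cdot p-\min u\cdot p$ must violate the width constraints of $\Kub$. Translation can be optimized out, so failure is expressed purely through widths of $P$ in the finitely many directions $R(\theta)^{-1}n_i$. I would discretize $\theta$ into finitely many intervals. On each interval, non-coverage yields a linear inequality among support values of $P$ in directions from the interval. Support values are realized by points of $P$, and any weighted sum $\sum_k\mu_k\,u_k\cdot(p_{a_k}-p_{b_k})$ can be bounded by the visiting length. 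If the directions $u_k$ with weights form a combination whose norm is at most $c$, then the sum is at most $c$ times the length of the subpath between the extreme points. Using the ordering of extreme points along the path, Cauchy--Schwarz on each segment gives a bound of the form $\sum\mu_k(\ldots)\le C\cdot\ell(\text{path})$.

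\emph{Certificate and main obstacle.} The contradiction comes from a finite certificate: a cover of all rotation angles by intervals. For each interval, and each combinatorial order in which the relevant extreme points occur along the path, we give rational nonnegative weights on the violated support inequalities. Summing them forces $\ell>1$, and this is checked in exact rational arithmetic. The hard step is this certificate. One must handle all orderings of extreme points, and each angle interval needs weights valid uniformly across it. For the latter I would use rational endpoints of the form $n(\sigma)$ and interval-bound the trigonometric dependence, as in the lower-bound box method. Finding weights is an LP per case; I expect the difficulty to lie in the case explosion and in the near-tight angles where $\Kub$ barely covers extremal arcs (segment, broadworm-like shapes).
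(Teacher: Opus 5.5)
Your area computation and your reduction to finitely many sample points are fine; the uniform-margin compactness argument works, though the paper's nested dense finite sets make it unnecessary. The gap is in the universality argument, whose central step is misframed in two places.

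\emph{Eliminating translations.} This does not produce a width condition. The normals satisfy $a_{j0}n_0+a_{j1}n_1+n_j=0$ with positive coefficients for $j=2,3$. Hence a translate fits exactly when $\Psi_2\le h_2$ and $\Psi_3\le h_3$. So failure at one orientation $M$ is a \emph{disjunction} of two three-term support inequalities (\cref{lem:escape}), not a single inequality.

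\emph{Combining orientations.} No contradiction can come from one orientation, or from a short interval of orientations. $\Kub$ has vertical extent about $0.527$, so a vertical segment of length $0.6$ escapes at every rotation within roughly $28^\circ$ of the identity. Your step ``for each interval, weights on the violated inequalities force $\ell>1$'' therefore cannot work. A certificate must conjoin escape alternatives at several well-separated orientations. The paper does this with a tree that branches on the two alternatives for finitely many exact rational orthogonal matrices $M$. Each leaf combines all escape inequalities inherited along its branch. The values $h_Q(Mn_i)$ are bounded above using nonnegative rational expansions of $Mn_i$ in fixed contact directions, so no angle intervals or trigonometric enclosures are needed.

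\emph{Controlling the visiting order.} Your length step must be made precise as \cref{eq:length}. With $\sum_\lambda f_\lambda=0$, the weighted sum is at most $\ellg_Q$ provided every completed prefix sum $w_r$ has norm at most $1$. Bounding terms separately by subpath lengths overcounts. Enumerating all orders of the contacts is factorial; you name this as the obstacle but do not resolve it. The paper resolves it with two ideas missing from your plan:
\begin{enumerate}
  \item It passes to $Q=\conv(\Gamma_n)$ with a \emph{shortest} visiting path. Such a path has no crossings, so every visited prefix is a boundary interval (\cref{lem:finite}).
  \item The normal form (\cref{lem:normal}), a rotating-calipers argument, gives a bottom edge $[P_-,P_+]$ and topmost vertex $P_0$ with $\tau(P_-)<\tau(P_0)<\tau(P_+)$. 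This also fixes the frame of $Q$, so finitely many fixed $M$ and contact directions make sense.
\end{enumerate}
Together these force $\tau(L_j)\le\tau(P_0)\le\tau(R_i)$. They reduce all possible prefixes to the family $\mathcal F(\mathcal R)$ indexed by the integer pair $(p,q)$, which means $O(k_R^2+k_L^2)$ exact rational norm checks per leaf. Without these structural reductions, your plan does not yet specify a finite, checkable certificate.
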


\begin{figure}[htbp]
  \centering
  \begin{tikzpicture}[scale=5.6]
\fill[black!6] (0,0) -- (0.77080,0) -- (0.45656,0.29039) -- (-0.09765,0.52671) -- cycle;
\draw[->,figaxis] (-0.30,0) -- (0.95,0) node[below,figticklbl] {$x$};
\draw[->,figaxis] (0,-0.04) -- (0,0.62) node[right,figticklbl] {$y$};
\foreach \x in {0.25,0.5}
  \draw[figtick] (\x,0.012) -- (\x,-0.012) node[below,figticklbl] {\x};
\foreach \y in {0.25,0.5}
  \draw[figtick] (-0.012,\y) -- (0.012,\y)
    node[right,figticklbl,fill=white,fill opacity=0.7,text opacity=1,inner sep=0.5pt] {\y};
\draw[thick] (0,0) -- (0.77080,0) -- (0.45656,0.29039) -- (-0.09765,0.52671) -- cycle;
\fill (0,0) circle (0.006) node[below left,yshift=-2pt,figlbl] {$V_1=(0,0)$};
\fill (0.77080,0) circle (0.006) node[below right,figlbl] {$V_2$};
\fill (0.45656,0.29039) circle (0.006) node[above right,figlbl] {$V_3$};
\fill (-0.09765,0.52671) circle (0.006) node[above left,figlbl] {$V_4$};
\draw[->,thick,cbAcc] (0.38540,0) -- (0.38540,-0.13) node[below,figlbl] {$n_1$};
\draw[->,thick,cbAcc] (0.61368,0.14520) -- (0.70191,0.24068) node[right,figlbl] {$n_2$};
\draw[->,thick,cbAcc] (0.17945,0.40855) -- (0.23044,0.52813) node[above,figlbl] {$n_3$};
\draw[->,thick,cbAcc] (-0.04883,0.26335) -- (-0.17665,0.23965) node[left,figlbl] {$n_0$};
\end{tikzpicture}
  \caption{The quadrilateral $\Kub$, drawn to scale.
  Short arrows show the directions of its outward unit normals.}
  \label{fig:K}
\end{figure}

Intersecting the side lines in \cref{eq:K} verifies the quadrilateral,
and the shoelace formula gives its area. The same rational calculation
gives $|V_2-V_4|^2>1$. By convexity, $\Kub$ contains this diagonal and
hence a unit segment. This will rule out collinear witnesses in
\cref{lem:finite}. It remains to prove coverage of all unit arcs.

\subsection{A finite witness}\label{sec:ub-witness}

We first retain finitely many points of a hypothetical uncovered arc
and take their convex hull. We need two orders on the hull's vertices,
the order around the boundary and the order along a shortest visiting path.

A \emph{visiting path} joins all vertices once in some order by straight
segments, without returning to the starting vertex. A \emph{boundary
interval} is a consecutive block in the cyclic boundary order, allowing
wraparound (\cref{fig:witness}(a)). Related boundary-order reductions
appear in \cite[Proposition~3.3]{deng2026}.

\begin{lemma}[finite witness]\label{lem:finite}
If $\Kub$ does not cover a unit arc, it does not cover some convex
polygon $Q$ with $m\ge3$ vertices. Listed in the order of a shortest
visiting path, these vertices $Q_1,\ldots,Q_m$ satisfy
\[
  \ellg_Q=\sum_{j=1}^{m-1}|Q_{j+1}-Q_j|\le1.
\]
Every prefix $\{Q_1,\ldots,Q_j\}$ is a boundary interval.
\end{lemma}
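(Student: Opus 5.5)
Suppose $\gamma$ is a unit arc that $\Kub$ does not cover. The plan has four steps: reduce to a finite set of points, pass to its convex hull, handle the collinear case, and then choose a shortest visiting path and show its prefixes are boundary intervals.

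\emph{Step 1: a finite uncovered point set.} For each finite set $F\subseteq\tr(\gamma)$, let $G_F$ be the set of rigid motions $g$ with $g(F)\subseteq\Kub$. Since $\Kub$ is compact, $G_F$ is a closed set. If every $G_F$ were nonempty, I would restrict to motions that map some fixed point of $\tr(\gamma)$ into $\Kub$. These motions form a compact subset of $E(2)$. The sets $G_F$ have the finite intersection property, so their intersection would be nonempty. A motion in that intersection maps a dense countable subset of $\tr(\gamma)$ into $\Kub$, and by continuity and closedness it maps all of $\tr(\gamma)$ into $\Kub$, contradicting the assumption. Hence some finite $F=\{\gamma(t_1),\ldots,\gamma(t_k)\}$ with $t_1<\cdots<t_k$ is not covered.

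\emph{Step 2: the hull and a length bound.} Since $\Kub$ is convex, $\Kub$ covers $F$ exactly when it covers $Q=\conv(F)$. So $Q$ is uncovered, and so is its vertex set $W\subseteq F$. Visiting the points of $F$ in the order of the parameter $t$ gives a path of length at most $\ellg(\gamma)=1$, by the definition of arc length. Skipping the points not in $W$ does not increase length, by the triangle inequality. So some visiting path of $W$ has length at most one, and therefore a shortest visiting path has length $\ellg_Q\le1$.

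\emph{Step 3: the collinear case.} If $W$ has at most two points, or is collinear, then $Q$ is a segment of length at most $\ellg_Q\le1$. The diagonal $[V_2,V_4]$ has length greater than one, so $\Kub$ covers $Q$, which is a contradiction. Hence $Q$ is a genuine convex polygon with $m\ge3$ vertices.

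\emph{Step 4: prefixes are boundary intervals.} Among shortest visiting paths of the vertex set, I choose one that is lexicographically extremal, or that minimizes some tie-breaking quantity. Suppose some prefix $\{Q_1,\ldots,Q_j\}$ is not a boundary interval. Then the prefix set and its complement interleave on the boundary of $Q$. I expect to find indices $a<b$ such that segment $[Q_a,Q_{a+1}]$ crosses segment $[Q_b,Q_{b+1}]$. The standard 2-opt move is to reverse the subpath $Q_{a+1},\ldots,Q_b$. This replaces the two crossing edges by $[Q_a,Q_b]$ and $[Q_{a+1},Q_{b+1}]$, and the triangle inequality applied at the crossing point shows it strictly shortens the path. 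That contradicts minimality. So a shortest path has no crossings, and I then need to show that a noncrossing visiting path of points in convex position has all prefixes as boundary intervals. I would argue this by induction: after visiting a boundary interval $I$, the next vertex must be adjacent to $I$ on the boundary. Otherwise the edge into it separates the unvisited vertices into two nonempty groups on either side of a chord. The path must later reach both groups, and it can only do so by crossing that chord edge or an edge of the already-visited interval.

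I expect Step 4 to be the main obstacle. The argument must handle the endpoint $Q_m$ of the path, and it must handle degenerate cases where three vertices are collinear in a crossing; with strictly convex vertices such touching is ruled out. Making the separation argument airtight, and checking that the 2-opt move gives a strict decrease, is where I would spend the effort. Step 1, the compactness argument, is standard but must be stated carefully.
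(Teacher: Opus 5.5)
Your proposal is correct and follows the paper's route: compactness yields an uncovered finite subset, the arc-order path on its hull gives $\ell_Q\le1$, and the long diagonal $[V_2,V_4]$ rules out collinear hulls. A 2-opt argument, strict because the vertices are in strictly convex position, then shows that a shortest path has no crossings, and a separating-chord argument makes each new vertex extend a boundary interval. The differences are cosmetic. You use the finite intersection property instead of a convergent subsequence, and you induct on visited prefixes rather than on suffixes. Your hedge about crossing ``an edge of the already-visited interval'' is unnecessary, since any edge joining the two separated groups already crosses $[Q_j,Q_{j+1}]$.
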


\begin{proof}
Let $\Gamma$ be the trace of an uncovered unit arc. Choose increasing
finite sets $\Gamma_n\subseteq\Gamma$ with dense union, all containing
a fixed point $x_0$. Some $\Gamma_n$ is not covered by $\Kub$.
Otherwise, choose rigid motions $g_n(x)=M_nx+b_n$ placing $\Gamma_n$
in $\Kub$. The translations $b_n$ are bounded because $\Kub$ is
bounded, $g_n(x_0)\in\Kub$, and $|M_nx_0|=|x_0|$.
Compactness of $O(2)$ then gives a subsequence of these motions
converging to a rigid motion $g$. Each point of the union belongs to
all sufficiently large $\Gamma_n$, so its image under $g$ lies in the
closed set $\Kub$. By density and continuity, $g(\Gamma)\subseteq\Kub$,
a contradiction.

Fix an uncovered $\Gamma_n$ and put $Q=\conv(\Gamma_n)$.
Covering $Q$ would also cover $\Gamma_n$. Every vertex of $Q$ belongs
to $\Gamma_n$. Record one visit to each vertex along the arc and join
them in that order. By the definition of arc length, this path has
length at most one, so a shortest visiting path has length
$\ellg_Q\le1$. The hull cannot be collinear, since a collinear hull
would be a segment of length at most one and would fit in $\Kub$.

A shortest visiting path has no crossing edges. Reversing the portion
between two crossing edges and reconnecting the endpoints would
strictly shorten it by the triangle inequality. Strictness holds
because no three vertices of $Q$ are collinear.
The first edge of every suffix of the path must therefore be a boundary
edge of the hull of its remaining vertices. Otherwise it separates
remaining vertices into two nonempty sides. After traversing this edge,
the path cannot revisit either endpoint, so visiting both sides would
force a later edge to cross it.

After the first vertex is removed, the current vertex is an endpoint
of the remaining open boundary list. Its next edge goes either to its
neighbor on that list or to the other endpoint. Removing the current
endpoint preserves this property. Thus the unvisited vertices, and
hence every visited prefix, form a boundary interval.
\end{proof}

Fix such a polygon and path. Write $\tau(Q_j)=j$ for a vertex's
\emph{visiting rank}, and $x(P)$ for the horizontal coordinate of a
point $P$. We next place a bottom edge and a topmost vertex so that
the top vertex is visited between the bottom endpoints. These three
contacts will divide the path into four stages (\cref{fig:witness}(b)).

\begin{lemma}[normal form]\label{lem:normal}
After a rigid motion, $Q$ has a bottom edge $[P_-,P_+]$ on the
$x$-axis and a topmost vertex $P_0$ such that
\[
  x(P_-)<x(P_+),\qquad \tau(P_-)<\tau(P_0)<\tau(P_+).
\]
\end{lemma}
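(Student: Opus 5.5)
The plan is to use the boundary-interval structure from \cref{lem:finite} to understand how visiting ranks are arranged around the boundary. Then a rotating-calipers (discrete intermediate value) argument will produce a boundary edge whose farthest vertex is visited between that edge's endpoints. The rigid motion is routine once such an edge is found. Rotate and translate so the edge lies on the $x$-axis with $Q$ above it, which is possible because $Q$ is convex and $m\ge3$. The farthest vertex is then a topmost vertex $P_0$. Call the endpoint of smaller visiting rank $P_-$. If necessary, reflect in the $y$-axis so that $x(P_-)<x(P_+)$. Reflections are rigid motions, and they preserve the visiting ranks and the normal-form properties.

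\emph{Step 1: shape of the rank function.} Every prefix $\{Q_1,\dots,Q_j\}$ is a boundary interval, and each new vertex is added at one end of the current interval. Hence, going around the boundary cycle from $Q_m$, the rank $\tau$ decreases strictly along one arc $A$ down to $Q_1$ and then increases strictly along the other arc $B$ back to $Q_m$. One of the two arcs may be trivial. Consider a boundary edge $e=[X,Y]$ on arc $A$ with $\tau(X)<\tau(Y)$. The vertices with rank strictly between $\tau(X)$ and $\tau(Y)$ are exactly the vertices of $B$ added during that time window. These form a contiguous block $W(e)\subseteq B$. As $e$ moves along $A$ from $Q_1$ toward $Q_m$, the blocks $W(e)$ march monotonically along $B$ from $Q_1$ toward $Q_m$, and together they exhaust $B\setminus\{Q_1,Q_m\}$. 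The symmetric statement holds for edges on $B$, whose windows lie in $A$.

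\emph{Step 2: antipodal vertices.} For each boundary edge $e$, let $a(e)$ be a vertex at maximal distance from the line of $e$, with ties broken consistently. By rotating calipers, as $e$ runs around the boundary counterclockwise, $a(e)$ advances monotonically (weakly) counterclockwise and makes exactly one full turn. I would compare the position of $a(e)$ with the window $W(e)$ as $e$ traverses $A$ and then $B$. Take the edge of $A$ at $Q_1$. Its antipode is not $Q_1$ or its neighbor, so the antipode sits on the far side, at or beyond the window in the direction of $Q_m$. At the edge of $A$ at $Q_m$, the window has reached the end of $B$ near $Q_m$. Along the way, the window advances one block at a time, while the antipode advances continuously in the cyclic sense. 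Between these two extremes the antipode must at some edge land inside the current window, unless it leaves $B$ entirely. In that case I would continue the argument on arc $B$ with windows in $A$. There the roles are reversed, and the total winding number of the antipode (one full turn) forces a landing.

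\emph{Step 3: degenerate cases and ties.} I must handle the cases where one arc is a single edge, or where $a(e)$ is not unique. In the tie case it suffices that some farthest vertex lies in $W(e)$, and the statement only asks for \emph{a} topmost vertex. I would also use that no three vertices are collinear (from \cref{lem:finite}), so that farthest vertices of an edge are never its own endpoints.

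The main obstacle is Step 2: making the discrete intermediate-value argument airtight. The antipode and the window move at different speeds and can jump past each other. What I would try first is a potential-function formulation. Record, for each edge, whether $a(e)$ is before, inside, or after $W(e)$ in the cyclic order measured from $Q_1$. Then show that the status cannot switch from before to after without passing through inside. This should follow because both $a(e)$ and $W(e)$ advance monotonically, and consecutive windows share an endpoint.
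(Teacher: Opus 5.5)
Your Step~2 carries the entire content of the lemma, and the mechanism you propose for it does not work. The antipode and the windows move in opposite directions, not the same one. $A$ and $B$ run from $Q_1$ to $Q_m$ in opposite rotational senses, and rotating calipers turn $a(e)$ in the same sense as $e$. So as $e$ walks along $A$ towards $Q_m$, the windows move along $B$ towards $Q_m$ while the antipode turns back towards $Q_1$. Objects approaching each other by discrete jumps can pass without meeting. The status can go from ``after'' at $e_i$ to ``before'' at $e_{i+1}$: the antipode jumps across the cut at rank $\tau(X)$ between $W(e_i)$ and $W(e_{i+1})$, where $X$ is the common vertex of $e_i$ and $e_{i+1}$. (Your endpoint analysis needs to exclude this after-to-before switch, not before-to-after.)

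For a concrete case, take $v_0=(-1,0)$, $v_1=(0,-1/10)$, $v_2=(11/10,0)$, $v_3=(0,1/10)$, for which $v_0,v_1,v_3,v_2$ is a shortest visiting path. Then $A=v_0v_3v_2$ and $B=v_0v_1v_2$. The edge $[v_0,v_3]$ has window $\{v_1\}$ but farthest vertex $v_2$. The edge $[v_3,v_2]$ has empty window and farthest vertex $v_1$. No edge of $A$ works, yet the antipode ($v_2$, then $v_1$) stays on $B$. So your dichotomy ``it lands in a window along $A$ or it leaves $B$'' is false. The only normal form here is $[P_-,P_+]=[v_1,v_2]$ with $P_0=v_3$. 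Your closing appeal to the winding number on $B$ does not supply an argument either.

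What is missing is an analysis of the phases you skip, where both support lines touch single vertices. Between $e_i$ and $e_{i+1}$, one line pivots at $X$ while the opposite parallel line sweeps from $a(e_i)$ to $a(e_{i+1})$, never touching $X$. The prefix $\{Q_1,\ldots,Q_{\tau(X)}\}$ is a boundary interval with one end at $X$. If the status flips, the sweep must therefore cross the boundary edge at the other end of that interval. That edge's endpoint ranks straddle $\tau(X)$, and when the opposite line is flush with it, $X$ is a farthest vertex from it. With this addition your argument closes, since the status is inside or after at the first edge of $A$ and inside or before at the last.

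But this repair is essentially the paper's proof, which runs the sweep directly. Rotate two labelled parallel support lines through a half-turn. Their contacts swap, so the line touching the earlier-visited vertex must change at some direction where a line touches an edge. At that direction the rank intervals spanned by the two lines' contacts must overlap, and distinct endpoint ranks force $a<c<b$ with the first line on an edge. That argument works for any injective ranking $\tau$, so your Step~1 analysis of the unimodal shape coming from \cref{lem:finite} is unnecessary.
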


\begin{proof}
Rotate two labeled parallel supporting lines around $Q$, keeping both
in contact with it. Start in a direction where each touches a single
vertex. After a half-turn, the contact vertices exchange lines, so
which line touches the earlier-visited vertex must change.
Such a change occurs at one of the finitely many directions where
a line touches an edge.

At that direction, each line touches a vertex or an edge. Its contacts
immediately before and after are the endpoints of that edge, or the
same vertex when no edge is touched. For each line, take the interval
spanned by these ranks, using a single rank when it touches a vertex. The two rank intervals overlap. Otherwise
the same labeled line would touch the earlier-visited vertex both before
and after the change. Since $Q$ is not collinear, the two touched sets
are disjoint, so their endpoint ranks are distinct. Write the intervals
as $[a,b]$ and $[c,d]$, exchanging the lines so that $a<c$.
Their overlap gives $a<c<b$, so the first line touches an edge.
Let $P_-$ and $P_+$ be its endpoints of ranks $a$ and $b$, and let
$P_0$ be the contact of rank $c$ on the opposite line. Then
\[
  \tau(P_-)=a<c=\tau(P_0)<b=\tau(P_+).
\]
Move the edge onto the $x$-axis with $Q$ above it. Then $P_0$ is
topmost. Reflect in the $y$-axis if necessary to obtain
$x(P_-)<x(P_+)$.
\end{proof}

\begin{figure}[t]
  \centering
  \begingroup
\newcommand{\witnessvertices}{%
  \coordinate (a0) at (-1,0);
  \coordinate (a1) at (1,0);
  \coordinate (a2) at (1.38,.75);
  \coordinate (a3) at (.90,1.45);
  \coordinate (a4) at (0,1.75);
  \coordinate (a5) at (-.85,1.50);
  \coordinate (a6) at (-1.35,.75);
}
\begin{tikzpicture}[line join=round,line cap=round]
  \node[figlbl] at (0,2.80) {(a) Visiting path and prefix};
  \node[figlbl] at (6,2.80) {(b) Bottom and top contacts};
  \begin{scope}[scale=1.25]
    \witnessvertices
    \path[fill=black!3,draw=black!35,line width=.4pt]
      (a0)--(a1)--(a2)--(a3)--(a4)--(a5)--(a6)--cycle;
    \draw[cbB!55,line width=3pt] (a4)--(a5)--(a6)--(a0);
    \foreach \i/\j in {5/6,6/0,0/4,4/3,3/1,1/2}{
      \draw[black!75,line width=.8pt,
        postaction={decorate},
        decoration={markings,mark=at position .55 with {\arrow{stealth}}}]
        (a\i)--(a\j);
    }
    \foreach \i in {0,...,6}
      \fill[black!65] (a\i) circle[radius=1.5pt];
    \foreach \i in {5,6,0,4}
      \fill[cbB] (a\i) circle[radius=2.3pt];
    \node[figlbl,above left] at (a5) {$Q_1$};
    \node[figlbl,left] at (a6) {$Q_2$};
    \node[figlbl,below left] at (a0) {$Q_3$};
    \node[figlbl,above] at (a4) {$Q_4$};
    \node[figlbl,above right] at (a3) {$Q_5$};
    \node[figlbl,below right] at (a1) {$Q_6$};
    \node[figlbl,right] at (a2) {$Q_7$};
    \node[figlbl,text=cbB!75!black] at (0,-.56)
      {$\{Q_1,Q_2,Q_3,Q_4\}$};
  \end{scope}
  \begin{scope}[shift={(6,0)},scale=1.25]
    \witnessvertices
    \path[fill=black!3,draw=black!50,line width=.5pt]
      (a0)--(a1)--(a2)--(a3)--(a4)--(a5)--(a6)--cycle;
    \draw[figlead,dashed] (-1.65,0)--(1.65,0);
    \draw[figlead,dashed] (-1.65,1.75)--(1.65,1.75);
    \draw[figseg,cbAcc] (a0)--(a1);
    \foreach \i in {0,...,6}
      \fill[black!55] (a\i) circle[radius=1.4pt];
    \foreach \i in {0,1,4}
      \fill[cbAcc] (a\i) circle[radius=2.1pt];
    \node[figlbl,below left] at (a0) {$P_-$};
    \node[figlbl,below right] at (a1) {$P_+$};
    \node[figlbl,above] at (a4) {$P_0$};
    \node[figlbl] at (.02,.76) {$Q$};
    \node[figlbl] at (0,-.56) {$\tau(P_-)<\tau(P_0)<\tau(P_+)$};
  \end{scope}
\end{tikzpicture}
\endgroup
  \caption{(a) Arrows give the visiting order $Q_1,\ldots,Q_7$.
  The thick boundary edges join $Q_4,Q_1,Q_2,Q_3$ in boundary order,
  showing that the first four visited vertices form a boundary interval.
  (b) The same polygon has
  bottom and top contacts $(P_-,P_0,P_+)=(Q_3,Q_4,Q_6)$.
  The illustrated path is schematic and need not be shortest.}
  \label{fig:witness}
\end{figure}

A fixed bound on the number of segments of a polygonal arc does not
suffice to test universal coverage \cite{pww2007}. We will instead use
a fixed finite list of support contacts in \cref{sec:ub-escape}.

\subsection{Contacts and escape inequalities}\label{sec:ub-escape}

An uncovered polygon must fail to fit in $\Kub$ in every orientation.
We first eliminate translations, then express this failure using
finitely many support contacts. The certificate will use finitely
many orientations to rule out all resulting cases.

\paragraph{Eliminating translations.}
The \emph{support function} of $Q$ is
\[
  h_Q(u)=\max_{x\in Q}u\cdot x,\qquad u\in\R^2.
\]
A vertex attaining this maximum is a \emph{support contact} in direction
$u$. A translate $Q+b$ lies in $\Kub$ exactly when
$h_Q(n_i)+n_i\cdot b\le h_i$ for all four sides.
The normals satisfy
\[
  a_{j0}n_0+a_{j1}n_1+n_j=0,\qquad j\in\{2,3\},
\]
with uniquely determined positive rational coefficients $a_{j0},a_{j1}$.
Their positivity follows from the side data following \cref{eq:K}. Define
\[
  \Psi_j(Q)=a_{j0}h_Q(n_0)+a_{j1}h_Q(n_1)+h_Q(n_j),
  \qquad j\in\{2,3\}.
\]

\begin{lemma}[translation criterion]\label{lem:escape}
A translate of $Q$ lies in $\Kub$ if and only if
$\Psi_2(Q)\le h_2$ and $\Psi_3(Q)\le h_3$.
If $\Kub$ does not cover $Q$, then for every $M\in O(2)$ at least one
of the two inequalities
\begin{equation}\label{eq:escape}
  a_{j0}h_Q(Mn_0)+a_{j1}h_Q(Mn_1)+h_Q(Mn_j)>h_j,
  \qquad j\in\{2,3\},
\end{equation}
holds.
\end{lemma}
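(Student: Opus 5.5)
The first claim is a statement about translates only. We need the existence of $b\in\R^2$ with $n_i\cdot b\le h_i-h_Q(n_i)$ for $i=0,1,2,3$, and we treat it as a feasibility question for this two-variable linear system.

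For necessity, suppose such a $b$ exists. Multiply the inequalities for $i=0,1$ by $a_{j0},a_{j1}>0$, add the inequality for $i=j$, and use $a_{j0}n_0+a_{j1}n_1+n_j=0$. The terms in $b$ cancel. Since $h_0=h_1=0$, this gives $\Psi_j(Q)\le h_j$.

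For sufficiency, note that $n_0$ and $n_1$ are linearly independent, since the sides $0$ and $1$ meet at $V_1$. So there is a unique $b$ with $n_0\cdot b=-h_Q(n_0)$ and $n_1\cdot b=-h_Q(n_1)$. This places the corner of $Q+b$'s supporting wedge at $V_1$, and both constraints $0,1$ hold with equality. Then
\[
n_j\cdot b=-a_{j0}\,n_0\cdot b-a_{j1}\,n_1\cdot b=a_{j0}h_Q(n_0)+a_{j1}h_Q(n_1),
\]
so $h_Q(n_j)+n_j\cdot b=\Psi_j(Q)\le h_j$ for $j=2,3$. Hence $Q+b\subseteq\Kub$. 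Here we use the stated equivalence that $Q+b\subseteq\Kub$ exactly when $h_{Q+b}(n_i)=h_Q(n_i)+n_i\cdot b\le h_i$ for all $i$, which holds because $\Kub$ is the intersection of these four half-planes.

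For the second claim, fix $M\in O(2)$ and apply the first part to $M^{-1}Q$. The map $g(x)=M^{-1}x+b$ places $Q$ in $\Kub$ iff $M^{-1}Q+b\subseteq\Kub$. Also $h_{M^{-1}Q}(u)=\max_{x\in Q}u\cdot M^{-1}x=h_Q(Mu)$, since $M$ is orthogonal. If both inequalities \eqref{eq:escape} failed for this $M$, the first part would give a translate of $M^{-1}Q$ inside $\Kub$, so $\Kub$ would cover $Q$, a contradiction.

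No step is a real obstacle. The only points that need care are the positivity of $a_{j0},a_{j1}$, which the necessity direction uses and which the paper asserts from the side data, and the use of $h_0=h_1=0$ in the sufficiency computation.
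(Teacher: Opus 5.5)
Your proof is correct and follows the paper's argument: sufficiency uses the same translation $b$ pinned to sides $0$ and $1$. Your weighted sum of the four constraints for necessity is the computation the paper writes as $n_j\cdot w\ge0$ for a further translation $w$ respecting the first two sides, and your reduction of the rotated case via $h_{M^{-1}Q}(n_i)=h_Q(Mn_i)$ matches the paper's as well.
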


\begin{proof}
Since $n_0,n_1$ are independent, choose the unique translation $b$ with
$n_0\cdot b=-h_Q(n_0)$ and $n_1\cdot b=-h_Q(n_1)$.
Then $Q+b$ touches the supporting lines of sides $0,1$, whose offsets
are zero. For the remaining sides, the normal relations give
\[
  h_{Q+b}(n_j)=h_Q(n_j)+n_j\cdot b=\Psi_j(Q),
  \qquad j\in\{2,3\}.
\]
Any further translation $w$ respecting the first two sides satisfies
$n_0\cdot w,n_1\cdot w\le0$, and therefore
\[
  n_j\cdot w=-a_{j0}n_0\cdot w-a_{j1}n_1\cdot w\ge0,
  \qquad j\in\{2,3\}.
\]
Thus $b$ minimizes both remaining support values among translations
respecting the first two sides, proving the criterion. Apply it to
$M^{\mathsf T}Q$ and use $h_{M^{\mathsf T}Q}(n_i)=h_Q(Mn_i)$ to obtain
\cref{eq:escape}. We call these the two \emph{escape alternatives} for $M$.
\end{proof}

\paragraph{Choosing contacts.}
The certificate specifies rational unit directions
$u^R_1,\ldots,u^R_{k_R}$ in the open right half-plane, strictly ordered
from downward to upward, and $u^L_1,\ldots,u^L_{k_L}$ in the open left
half-plane, strictly ordered from upward to downward. Choose support
contacts $R_i,L_j$ in these directions. As the outward direction turns,
support contacts follow the boundary of a convex polygon. Together with
$P_-,P_+,P_0$, they occur in the weak cyclic boundary order
\begin{equation}\label{eq:boundary}
  P_-,\ P_+,\ R_1,\ldots,R_{k_R},\ P_0,\ L_1,\ldots,L_{k_L}.
\end{equation}
Both bottom contacts have direction $(0,-1)$, and the top contact
$P_0$ has direction $(0,1)$. The contacts and their visiting ranks come
from the hypothetical polygon $Q$.

Let $\Lambda$ be the set of these $k_R+k_L+3$ contact labels, and write
$P_\lambda,u_\lambda$ for the point and direction attached to label
$\lambda$. Distinct labels may denote the same polygon vertex.
Sets of contacts below retain these labels separately. The support
contacts and bottom edge satisfy
\begin{equation}\label{eq:support}
  u_\lambda\cdot(P_\lambda-P_\eta)\ge0
  \quad(\lambda,\eta\in\Lambda),\qquad
  x(P_+)-x(P_-)\ge0.
\end{equation}

\paragraph{Linear escape inequalities.}
For each direction $u=Mn_i$ used in \cref{eq:escape}, we compute an
expansion $u=\sum_{\lambda\in\Lambda}\kappa_\lambda u_\lambda$
from the chosen directions, with nonnegative rational coefficients
$\kappa_\lambda$.
For every $x\in Q$, each $u_\lambda\cdot x$ is at most
$h_Q(u_\lambda)$. Taking the maximum over $x$ therefore gives
\[
  h_Q(u)\le\sum_{\lambda\in\Lambda}\kappa_\lambda h_Q(u_\lambda)
    =\sum_{\lambda\in\Lambda}\kappa_\lambda
       u_\lambda\cdot P_\lambda.
\]
A direction already chosen uses a one-term expansion, with $P_+$ used
for the downward direction. Substituting these upper bounds in
\cref{eq:escape} makes its left side no smaller. This gives a necessary
strict linear \emph{escape inequality} in the contact coordinates,
with right side $h_j$. For each chosen $M$, an uncovered $Q$ satisfies
at least one of the two resulting linear escape inequalities.

\subsection{From contact order to a length bound}\label{sec:ub-length}

We want a weighted sum whose left side is at most $\ellg_Q$
and whose right side is at least one. We first show how sums of coefficient
vectors over visited contacts control this left side. We then use the
boundary order to give a finite list of coefficient sums to check.

\paragraph{Why prefix sums control length.}
For $0\le r\le m$, define the \emph{completed prefix}
\[
  \Lambda_r=\{\lambda\in\Lambda:\tau(P_\lambda)\le r\}.
\]
Labels at the same vertex enter together. Consider rational coefficient
vectors $f_\lambda\in\mathbb Q^2$ with
$\sum_{\lambda\in\Lambda}f_\lambda=0$, and set
\[
  w_r=-\sum_{\lambda\in\Lambda_r}f_\lambda,
  \qquad 0\le r\le m.
\]
Then $w_0=w_m=0$. The coefficients attached to vertex $Q_j$ sum to
$w_{j-1}-w_j$, including all contact labels at that vertex.
If $|w_r|\le1$ for every $r$, regrouping by vertex and then by edge gives
\begin{equation}\label{eq:length}
\begin{aligned}
  \sum_{\lambda\in\Lambda}f_\lambda\cdot P_\lambda
   &=\sum_{j=1}^{m}(w_{j-1}-w_j)\cdot Q_j\\
   &=\sum_{j=1}^{m-1}w_j\cdot(Q_{j+1}-Q_j)
     \le\sum_{j=1}^{m-1}|Q_{j+1}-Q_j|=\ellg_Q.
\end{aligned}
\end{equation}
The inequality applies Cauchy--Schwarz to each edge. Related support
and length estimates appear in \cite{temerevDoria2026} and
\cite[Lemma~2.7]{deng2026}.

For $I\subseteq\Lambda$, write $I^c=\Lambda\setminus I$.
Since the total coefficient vector is zero, $I$ and $I^c$ have opposite
coefficient sums, hence equal norms. We may therefore check either
the visited or the unvisited contacts.

\paragraph{Which prefixes can occur.}
By \cref{lem:finite}, every completed contact prefix $\Lambda_r$
is a boundary interval in the order \cref{eq:boundary}.
Any such interval containing $P_-$ and a right contact must contain
$P_+$ or $P_0$. Thus a right contact $R_i$ cannot be visited before
$P_0$. Otherwise the prefix at
$r=\max\{\tau(R_i),\tau(P_-)\}<\tau(P_0)$ would contain $R_i,P_-$
but neither $P_0$ nor $P_+$. Similarly, the prefix at $P_0$ contains
$P_-$ and excludes $P_+$, so it contains every left contact. Hence
\[
  \tau(L_j)\le\tau(P_0)\le\tau(R_i).
\]
The left contacts visited by $P_-$ and the right contacts visited
before $P_+$ form final blocks in their respective boundary lists.
Define integers $0\le p\le k_R$ and $0\le q\le k_L$ by
\begin{equation}\label{eq:contact-pair}
  \{i:\tau(R_i)\ge\tau(P_+)\}=\{1,\ldots,p\},\qquad
  \{j:\tau(L_j)>\tau(P_-)\}=\{1,\ldots,q\}.
\end{equation}
Thus $p$ counts right contacts still unvisited just before $P_+$,
and $q$ counts left contacts still unvisited just after $P_-$.
The different weak and strict inequalities account for coincident contacts.

We check ranges of pairs $(p,q)$ together. Let
\[
  \mathcal R=([p_{\rm lo},p_{\rm hi}]\times[q_{\rm lo},q_{\rm hi}])
       \cap\mathbb Z^2
\]
be a nonempty rectangle with integer endpoints in
$[0,k_R]\times[0,k_L]$.
For fixed $p,q$, there are four stages (\cref{fig:prefix}).
Before $P_-$, the visited contacts form a block in
$L_{q+1},\ldots,L_{k_L}$. They then grow as a final block of the left
list attached to $P_-$. After $P_0$, the unvisited contacts form an
initial block of the right list attached to $P_+$. After $P_+$,
only a block within $R_1,\ldots,R_p$ remains.
In the notation of the table below, these restrictions give
$a\ge q+1$, $j\le q+1$, $s\ge p$, and $b\le p$, respectively.
Allowing $(p,q)$ to range over $\mathcal R$ gives the following family
$\mathcal F(\mathcal R)$ of sets of contact labels.

\begin{figure}[t]
  \centering
  \begingroup
\newcommand{\prefixpanel}[4]{%
  \begin{scope}[xshift=#1cm]
    \node[figlbl] at (0,1.90) {#2};
    \foreach \i/\ang/\lab in {
      0/225/P_-,1/315/P_+,2/340/R_1,3/370/R_2,4/400/R_3,
      5/425/R_4,6/450/P_0,7/475/L_1,8/505/L_2,9/530/L_3,10/560/L_4
    }{
      \coordinate (v\i) at (\ang:1.08);
      \node[figsmall] at (\ang:1.40) {$\lab$};
    }
    \draw[black!35,line width=.45pt]
      (v0)--(v1)--(v2)--(v3)--(v4)--(v5)--(v6)
      --(v7)--(v8)--(v9)--(v10)--cycle;
    \foreach \i in {0,...,10}
      \draw[draw=black!50,fill=white,line width=.5pt]
        (v\i) circle[radius=1.7pt];
    \foreach \i in {#3}
      \fill[cbB] (v\i) circle[radius=2.3pt];
    \node[figsmall,align=center] at (0,-1.45) {#4};
  \end{scope}%
}
\begin{tikzpicture}
  \prefixpanel{0}{(a) Before $P_-$}
    {9,10}{$\Lambda_r=\{L_3,L_4\}$}
  \prefixpanel{3.85}{(b) $P_-$ to $P_0$}
    {0,8,9,10}{$\Lambda_r=\{P_-,L_2,L_3,L_4\}$}
  \prefixpanel{7.70}{(c) $P_0$ to $P_+$}
    {0,5,6,7,8,9,10}{$\Lambda_r^c=\{P_+,R_1,R_2,R_3\}$}
  \prefixpanel{11.55}{(d) After $P_+$}
    {0,1,2,4,5,6,7,8,9,10}{$\Lambda_r^c=\{R_2\}$}
  \fill[cbB] (4.15,-1.98) circle[radius=2.3pt];
  \node[figsmall,anchor=west] at (4.30,-1.98) {visited};
  \draw[draw=black!50,fill=white,line width=.5pt]
    (5.85,-1.98) circle[radius=1.7pt];
  \node[figsmall,anchor=west] at (6.00,-1.98) {unvisited};
  \node[figsmall] at (8.25,-1.98) {$p=q=2$};
\end{tikzpicture}
\endgroup
  \caption{Four stages with $k_R=k_L=4$ and $p=q=2$.
  $\Lambda_r$ is the set of contact labels visited by rank $r$
  (filled dots), and $\Lambda_r^c$ is the set of remaining contact
  labels (open circles).}
  \label{fig:prefix}
\end{figure}

\begin{center}\small
\setlength{\tabcolsep}{5pt}
\begin{tabular}{lll}
\toprule
Stage & Prefix or complement & Range\\
\midrule
$r<\tau(P_-)$
 & $\Lambda_r=\{L_a,\ldots,L_b\}$ & $q_{\rm lo}+1\le a\le b\le k_L$\\
$\tau(P_-)\le r<\tau(P_0)$
 & $\Lambda_r=\{P_-,L_j,\ldots,L_{k_L}\}$ & $1\le j\le q_{\rm hi}+1$\\
$\tau(P_0)\le r<\tau(P_+)$
 & $\Lambda_r^c=\{P_+,R_1,\ldots,R_s\}$ & $p_{\rm lo}\le s\le k_R$\\
$\tau(P_+)\le r$
 & $\Lambda_r^c=\{R_{a+1},\ldots,R_b\}$ & $0\le a\le b\le p_{\rm hi}$\\
\bottomrule
\end{tabular}
\end{center}

Empty index ranges are allowed. For a witness with $(p,q)\in\mathcal R$,
every nonempty proper completed prefix satisfies
$\Lambda_r\in\mathcal F(\mathcal R)$ or
$\Lambda_r^c\in\mathcal F(\mathcal R)$.
Empty and full prefixes give $w_r=0$. Vertices with no contact label
do not change a prefix sum. Thus $O(k_R^2+k_L^2)$ coefficient checks
cover every visiting rank, independently of the number of vertices of $Q$.

\subsection{The finite certificate}\label{sec:ub-certificate}

A computer search builds a finite tree of cases. Each leaf records
rational multipliers whose weighted inequality, combined with
\cref{eq:length}, forces $\ellg_Q>1$.

\paragraph{Covering all cases.}
The root has no escape assumptions and the full integer rectangle
$([0,k_R]\times[0,k_L])\cap\mathbb Z^2$ of possible pairs $(p,q)$.
An internal node either branches into both escape alternatives for
one specified orthogonal matrix $M$, or covers its rectangle by child
rectangles. Escape assumptions are inherited down the tree.
Each leaf uses only the inherited escape inequalities.
By \cref{lem:escape} and rectangle coverage, every uncovered polygon
reaches a leaf whose assumptions it satisfies.

\paragraph{Certifying a leaf.}
A leaf handles one rectangle $\mathcal R$ and the inherited escape choices.
It supplies nonnegative rational multipliers $\mu_k$ on the support and
bottom-edge inequalities in \cref{eq:support} and the selected linear
escape inequalities. Let $\mathcal E_2,\mathcal E_3$ index the selected
escape inequalities with right sides $h_2,h_3$.
Collecting the coefficients gives rational vectors $f_\lambda$ and
\[
  \sum_{\lambda\in\Lambda}f_\lambda\cdot P_\lambda\ge B,
  \qquad
  B=h_2\sum_{k\in\mathcal E_2}\mu_k
      +h_3\sum_{k\in\mathcal E_3}\mu_k.
\]
The support and bottom-edge inequalities contribute zero to the right side.
We reconstruct $f_\lambda$ from the multipliers and check
\begin{equation}\label{eq:checks}
  B\ge1,\qquad
  \sum_{\lambda\in\Lambda}f_\lambda=0,\qquad
  \left|\sum_{\lambda\in I}f_\lambda\right|^2\le1
       \quad(I\in\mathcal F(\mathcal R)).
\end{equation}
The squared norms keep these comparisons rational.
Because $B\ge1$ and $h_2,h_3>0$, some escape inequality has a positive
multiplier. Escape inequalities are strict, so the weighted sum is
strictly greater than $B$. When $(p,q)\in\mathcal R$, the prefix checks
and \cref{eq:length} bound the same sum by $\ellg_Q$.

All checks use the fixed side data following \cref{eq:K} and target length $1$.
They verify the direction conditions, orthogonality, the nonnegative
expansions, multiplier nonnegativity, and branch coverage using exact
rational arithmetic. The supplied certificate passes these checks.
Its formal verification, size and run times are described in
\cref{sec:verification,tab:certificates}.

\begin{proof}[Proof of \cref{thm:ub}]
Suppose $\Kub$ does not cover a unit arc. By \cref{lem:finite,lem:normal},
there is an uncovered polygon $Q$ in normal form with
$\ellg_Q\le1$. Choose its contacts as in \cref{sec:ub-escape} and
its pair $(p,q)$ from \cref{eq:contact-pair}.
Follow the certificate tree through a true escape alternative at each
orientation branch and a rectangle containing $(p,q)$ at each subdivision.
At the resulting leaf, all selected inequalities hold and the prefix
checks apply. Hence
\[
  1\le B<\sum_{\lambda\in\Lambda}f_\lambda\cdot P_\lambda
        \le\ellg_Q\le1,
\]
a contradiction. Thus $\Kub$ covers every unit arc, and its rational
area calculation completes the proof.
\end{proof}

\begin{proof}[Proof of \cref{thm:main}]
\Cref{thm:lb} gives $\aopt\ge\lboundvalue$.
By \cref{thm:ub}, $\Kub$ is a convex universal cover whose area is the
stated rational upper bound. Thus $\aopt\le\area(\Kub) = \uboundvalue = \uboundshort$.
\end{proof}

\section{Certificates and formalization}\label{sec:verification}

Both bounds in \cref{thm:main} have been verified by Lean's kernel.
We explain how the certificates of
\cref{sec:lb-certificate,sec:ub-certificate} complete the formal proof,
report the final axiom audit, and record the data needed to reproduce
the verification.

\paragraph{Kernel verification.}
The geometric arguments and certificate checkers are formalized in
Lean~4 \cite{lean4} with mathlib \cite{mathlib}. The search programs
may use floating-point arithmetic to produce rational certificates,
which are exported to Lean declarations. The finite numerical checks
are proved using \lean{decide +kernel}. Formal soundness theorems
show that acceptance implies the geometric bounds. The kernel checks
the acceptance proofs, the soundness proofs and their final combination
(\cref{fig:verification-flow}). Correctness of the search programs is
not assumed, and no \lean{native\_decide} is used.

\begin{figure}[htbp]
  \centering
  \begin{tikzpicture}[
  x=1cm,y=1cm,>=stealth,
  proofstage/.style={
    draw=black!45,fill=white,line width=.4pt,rounded corners=2pt,
    font=\small,align=center,text width=2.5cm,minimum height=.95cm,
    inner sep=3pt
  },
  flowarrow/.style={->,draw=black!65,line width=.5pt}
]
  \path[fill=cbB!4,draw=cbB!50,line width=.4pt,rounded corners=3pt]
    (5.05,-1.95) rectangle (11.95,1.07);
  \node[figsmall,text=black!85] at (8.5,.79) {Checked by the Lean kernel};

  \node[proofstage,text width=1.9cm] (search) at (0,0) {Search};
  \node[proofstage] (certificate) at (3,0) {Rational\\certificates};
  \node[proofstage] (acceptance) at (6.7,0) {Acceptance\\proofs};
  \node[proofstage] (soundness) at (6.7,-1.25) {Soundness\\proofs};
  \node[proofstage] (bounds) at (10.25,-.625) {Bounds\\\cref{thm:main}};

  \draw[flowarrow] (search.east)--(certificate.west);
  \draw[flowarrow] (certificate.east)--(acceptance.west);
  \draw[flowarrow] (acceptance.east)--(bounds.north west);
  \draw[flowarrow] (soundness.east)--(bounds.south west);
\end{tikzpicture}
  \caption{From search to the bounds. Certificate acceptance and geometric
  soundness provide the two proof inputs to the final theorem.
  Every proof in the shaded region is checked by Lean's kernel.}
  \label{fig:verification-flow}
\end{figure}

\paragraph{The final theorem and its axioms.}
The combined theorem \lean{MoserWorm.bounds} states \cref{thm:main}.
After importing \lean{MoserWorm}, the command
\lean{\#print axioms MoserWorm.bounds} reports
{\small
\begin{verbatim}
'MoserWorm.bounds' depends on axioms: [propext, Classical.choice, Quot.sound]
\end{verbatim}
}
These are propositional extensionality, classical choice and quotient
soundness. The build audits the theorem's dependencies and rejects
every other axiom, including \lean{sorryAx} for admitted proofs.

\paragraph{Reproduction and timings.}
Readers can verify the supplied compressed certificates without running
the search, or regenerate them and run the same checks. The repository provides separate
commands for each bound and pins Lean 4.33.1 and a fixed mathlib version.
The source code, certificates and
instructions are available at \GithubURL.

\Cref{tab:certificates} records the certificate sizes and measured wall
times on Linux with an AMD EPYC 9R14 processor, 64 logical CPUs and
128\,GiB RAM. Generation includes search, pruning and compression.
Verification includes proof export, kernel checking, the final theorem
build and the axiom audit.

\begin{table}[!htbp]
  \centering
  \small
  \begin{tabular}{lrr}
    \toprule
    & Lower & Upper\\
    \midrule
    \multicolumn{3}{l}{\emph{Trees and files}}\\
    Total tree nodes & $1\,052\,327$ & $24\,898$\\
    Leaves & $526\,164$ & $12\,663$\\
    Maximum tree depth & $68$ & $26$\\
    Compressed data (bytes) & $9\,074\,308$ & $22\,877\,184$\\
    Uncompressed data (bytes) & $19\,676\,535$ & $103\,447\,953$\\
    \midrule
    \multicolumn{3}{l}{\emph{Lean and measured wall times}}\\
    Generated Lean modules & $24\,287$ & $1\,636$\\
    Certificate generation & 1 h 24 min & 2 min \\
    Verification and axiom audit & 3 h 56 min & 59 min\\
    Combined theorem and audit & \multicolumn{2}{c}{27 s}\\
    Total verification, both bounds & \multicolumn{2}{c}{4 h 55 min}\\
    Total including generation, both bounds & \multicolumn{2}{c}{6 h 21 min}\\
    \bottomrule
  \end{tabular}
  \caption{Statistics for the supplied certificates. Lower node counts
  and depth include the ten initial subdivision levels joining $1024$
  stored subtrees. The root has depth zero.}
  \label{tab:certificates}
\end{table}

\section{Conclusion}\label{sec:conclusion}

We proved the new bounds $\lboundvalue\le\aopt\le\uboundshort$
(\cref{thm:main}), reducing the gap between the previous refereed
bounds by approximately $75.2\%$.
In \cref{sec:lb}, finite subdivision bounds the joint convex hull
area of the four arcs in \cref{fig:family} in every placement.
In \cref{sec:ub}, support and length inequalities show that any arc
not covered by the quadrilateral in \cref{fig:K} must have length
greater than one. Both proofs are formalized in Lean~4 and verified
by the Lean kernel (\cref{sec:verification}).

\paragraph{Further directions.}
The numerical placement in \cref{fig:joint-placement} suggests that
the chosen four arcs leave little room to raise the lower bound.
Larger gains may need different or additional arcs, along with
stronger geometric reductions to keep the larger placement search
manageable. For the upper bound, our quadrilateral has not been
proved optimal, and the length estimates may be too weak to certify
smaller covers. We would be interested to know whether the support
and length method of \cref{sec:ub} extends to polygons with more
sides and yields smaller universal covers. The arcs that are
hardest to fit might also offer clues to the shape of an optimal
cover. Determining the optimum would require
a convex universal cover and a matching lower bound.

\section*{Acknowledgments}

The author thanks Hendrik Van Maldeghem for his helpful comments on an
early version of the lower-bound argument.

This work relied heavily on generative AI tools. The mathematical content in
this paper is the product of AI-assisted research steered by the
author. The author accessed models from Anthropic (Claude Opus and
Fable families) and OpenAI (GPT-5 family and Astra)
through the agentic coding tools Claude Code and Codex as well as
through web chat interfaces. These tools carried out the exploration,
the discovery of the geometric reductions, the development and
formalization of the proofs in Lean 4, the implementation of the code, the
computational experiments, and the drafting of the paper.
The author reviewed and verified all AI-generated output and takes
full responsibility for the mathematical claims, code, and exposition.

\appendix
\section{The five-point area bound}\label{app:five-point}

We prove the $m=5$ case of \cref{lem:small-polygon}. The signed shoelace
expression of any ordered five points is at most the area of their convex hull.

\begin{proof}
Let $H_0$ be the original hull of the five points.
If $H_0$ is contained in a line, every signed area is zero.
Otherwise, the shoelace expression is affine in each point separately.
We may therefore move each point in turn to a vertex of $H_0$ without
decreasing the expression.

First suppose three consecutive points $p,q,r$ satisfy
$(q-p)\times(r-q)\le0$.
Deleting $q$ changes the signed area by
$-\tfrac12(q-p)\times(r-q)\ge0$.
The four-point bound then proves the claim.
A repeated point among a cyclic list of five creates either a zero
edge or a triple with equal first and last points, so it gives this case.
It remains to consider five distinct hull vertices with positive turns.

Number the vertices of $H_0$ counterclockwise. Let
$d_i\in\{1,2,3,4\}$ be the counterclockwise step between successive
vertices in the chosen list, with indices read modulo five.
Three distinct vertices of $H_0$ have positive orientation exactly when
they occur in counterclockwise cyclic order. Thus the two steps in a
positive-turn triple traverse less than a full circuit, so
$d_i+d_{i+1}<5$, or equivalently $d_i+d_{i+1}\le4$. Hence
\[
  5\le\sum_i d_i,\qquad
  2\sum_i d_i=\sum_i(d_i+d_{i+1})\le20.
\]
Returning to the starting vertex makes $\sum_i d_i$ a multiple of
five, so it is either five or ten.
If it is five, every step equals one. The list follows the boundary
and its signed area is exactly $\area(H_0)$.
If it is ten, every adjacent sum equals four.
The cycle has odd length, so every step equals two.
This is the pentagram order.

For the remaining case, label the hull vertices $a,b,c,d,e$
counterclockwise. An orientation-preserving affine change of coordinates
puts
\[
 a=(0,0),\quad b=(1,0),\quad e=(0,1),\quad
 c=(x,y),\quad d=(z,w),
\]
where $y,z\ge0$ by convexity.
The change multiplies all signed areas and the hull area by the same
positive factor.

Twice the signed area of the pentagram $(a,c,e,b,d)$ is $x+w-1$.
We compare it with three quadrilaterals in $H_0$.
\[
\begin{array}{c|ccc}
 \text{list}&(a,b,d,e)&(a,b,c,e)&(a,c,d,e)\\ \hline
 \text{twice the signed area}&w+z&x+y&xw-yz+z
\end{array}
\]
If either of the first two expressions is at least $x+w-1$, the
four-point bound finishes the proof.
Otherwise $x-z>1$ and $w-y>1$, and
\[
 (xw-yz+z)-(x+w-1)
 =(x-z-1)(w-y-1)+y(x-z-1)+z(w-y)\ge0.
\]
The third quadrilateral then bounds the pentagram.
Thus the five-point expression is at most $\area(H_0)$.
\end{proof}

\section{Lower-bound certificate checks}\label{app:lb-checks}

The following details supplement \cref{sec:lb-certificate}.

\paragraph{Choosing the leaf data.}
The main choices in the search are as follows.
\begin{enumerate}
  \item \emph{Vertex lists.} Simple candidates include triangles
  using both endpoints of $L$ and a third vertex, and four-point
  lists using two vertices from one hull and two from another,
  ordered alternately.
  Further candidates are obtained from the convex hull of the
  approximate midpoint placement by deleting vertices, or by
  inserting or replacing a vertex with one outside that list.
  Candidates are compared using their area bounds and the
  required geometric checks.
  \item \emph{Weights.} For each candidate list, the search computes
  the centered polynomial coefficients of \cref{sec:lb-box}.
  Numerical linear programming seeks weights that maximize an
  approximation to the lower bound in \cref{eq:coefficient-bound}.
  Coefficients of opposite signs can cancel when the lists are
  combined, reducing the amounts subtracted from the constant
  coefficient. The proposed weights are rounded and adjusted to
  be nonnegative and sum to one. The combined polynomial is then
  rebuilt using these rational weights before the interval checks.
  \item \emph{Reusing the data.} Lists and weights proposed for a
  parent box can also be tried on its children. After subdivision,
  later passes attempt to replace groups of leaves by a single
  leaf on their parent box, reusing lists, choosing new weights or
  trying chord decompositions. Every replacement must pass the
  same geometric and area checks throughout the parent box.
\end{enumerate}
These choices guide the search. Acceptance always requires the
checks below on the whole box.

\paragraph{Subdivision and leaf data.}
The root is formed from the box in \cref{eq:domain} by replacing
$X$ with $0.603894$ and enclosing all endpoints using the outward
interval arithmetic below. Since $X<0.603894$ and $Y=0.478$,
the root contains $\mathcal D$.
Every lower interval endpoint is at most its upper endpoint.
A split uses the midpoint of the selected interval, rounded down
to the $2^{-56}$ grid. Both children include this point and retain
every other coordinate interval.
Each vertex list has at least three entries, all indices valid and
pairwise distinct. Different indices may still denote coincident
points. The weights satisfy $\lambda_\nu\ge0$ and
$\sum_\nu\lambda_\nu\le1$.

\paragraph{Numerical enclosures.}
Numerical interval endpoints are integer multiples of $2^{-56}$.
Lower endpoints are rounded down and upper endpoints up.
Integer square-root bounds enclose the algebraic constants, and
argument-reduced Taylor bounds with remainder estimates enclose
sine and cosine. The bounds for $\pi$ are justified by mathlib.

At each leaf, angle centers lie on this grid and satisfy $|\phi_i^0|\le64$, the
range covered by the trigonometric routine. Each angle interval must
lie in $[\phi_i^0-\rho_i,\phi_i^0+\rho_i]$ with
$0\le\rho_i<\pi/2$, as required by \cref{lem:rotation-reduction}.

\paragraph{Acceptance.}
The geometric conditions in \cref{sec:lb-fans} must hold throughout
each leaf box. Fan inequalities are strict, chord side inequalities
allow zero, and chord endpoints must be distinct. The lower estimate
in \cref{eq:coefficient-bound} must be at least $\lbsym$.
The interval operations and their soundness proofs are formalized
in Lean (\cref{sec:verification}).

\Cref{alg:lb-check} applies these checks recursively, starting with
the root box and the complete certificate tree. Any failed check
returns rejection immediately.

\begin{algorithm}[H]
  \captionsetup{labelsep=period}
  \caption{Checking a certificate tree}
  \label{alg:lb-check}
  \begin{algorithmic}
    \STATE \textbf{Input} a box $\mathcal B$ and a certificate node
    \IF{the node is a split}
      \STATE Form both child boxes in the recorded coordinate
      \STATE Apply this procedure to each child box and its child node
      \RETURN whether both calls accept
    \ENDIF
    \STATE Check the leaf data and angle conditions
    \STATE Construct the interval enclosures
    \STATE Check the required fan and chord conditions
    \RETURN whether \cref{eq:coefficient-bound} gives $F\ge\lbsym$
  \end{algorithmic}
\end{algorithm}

\bibliographystyle{plain}
\bibliography{refs}
\end{document}